\tikzset{Mylong/.style={text width=3.1cm, align=center}, myarr/.style={->, double equal sign distance, -implies}}
\newtheorem*{theorem*}{Theorem} 
\newtheorem{theorem}{Theorem}[section]
\newtheorem{lemma}[theorem]{Lemma}
\newtheorem*{lemma*}{Lemma}
\newtheorem{proposition}[theorem]{Proposition}
\theoremstyle{definition}
\newtheorem{definition}[theorem]{Definition}
\newtheorem{example}[theorem]{Example}
\theoremstyle{remark}
\newtheorem*{remark}{Remark}
\newcommand{\Hom}{\ensuremath{\operatorname{Hom}}}
\newcommand{\ke}{\ensuremath{\operatorname{Ker}}}
\title[On aperiodicity and hypercyclic weighted translation operators] {On aperiodicity and hypercyclic weighted translation operators}
\author[Kui-Yo Chen]{Kui-Yo Chen}
\subjclass[2010]{47A16, 47B38, 43A15}
\keywords{Aperiodicity, mixing, chaos, frequent hypercyclicity, hypercyclicity, weighted translation operator, locally compact group, $L^p$-space.}
\thanks{This work is partially supproted by MOST 104-2115-M-001-007 and 105-2115-M-001-003.}
\address{Department of Mathematics, National Taiwan University, Taiwan}
\email{r04221001@ntu.edu.tw}
\date{\today}
\begin{document}

\begin{abstract}
We give several equivalent characterization of aperiodicity of an element on locally compact group $G$, and give an intuition for ``How strong does the aperiodicity of an element affect the existence of hypercyclic weighted translation operators?''. In fact, if $a$ is an aperiodic element in $G$, then there exists a mixing, chaotic and frequently hypercyclic weighted translation $T_{a,w}$ on $L^p(G)$.
\end{abstract}

\maketitle
\addcontentsline{toc}{section}{Title}
\baselineskip17pt

\section{Introduction} 
\label{sec:introduction}
This paper has two parts. Firstly, we characterize some equivalent statements of aperiodicity of an element on locally compact group. Secondly, we use those equivalent statements of aperiodicity to give the existence of hypercyclic weighted translations. For some cases we actually can find expilcit form of hypercyclic weighted translations.

In the field of linear chaos, people focus on the linear operators which act on a Banach space and discuss their dynamic properties like hypercyclicity and chaoticity. For our discussion, we focus on \cite{Hypercyclic_on_groups}, see also \cite{Chaotic_on_groups, non-torsion} which characterize the chaoticity and hypercyclicity of a weighted translation operator on the $L^p$ space of a locally compact group.

An operator $T$ on a Banach space $X$ is called {\it hypercyclic} if there exists a vector $x\in X$ such that its orbit is dense in the whole space (i.e. $orb(T,x):=\left\{T^nx|n\in \mathbb{N}\right\}$ is dense in $X$). An operator $T$ is called {\it weakly mixing} if $T\oplus T$ defined on $X\times X$ is hypercyclic. An operator $T$ is called {\it mixing} if for any nonempty opens $U,V$ in $X$, there exists $N\in \mathbb{N}$ such that $T^nU\cap V\neq \varnothing$ for all $n>N$. An operator $T$ is called {\it chaotic} if it is hypercyclic and the set of periodic points is dense. An operator $T$ is called {\it frequently hypercyclic} if there is some $x \in X$ such that for any nonempty open subset $U$ of $X$, $n_k= O(k)$, where $n_k$ is a strictly increasing sequence of integers such that $T^{n_k}x$ is $k$-th element lying in $U$ (by \cite[Proposition 9.3. p.237]{Linear_chaos}, this is an equivalent statement of \cite[Definition 9.2. p.237]{Linear_chaos}). (Note that we only consider the case that the operator is a weighted translation in this paper).

\begin{theorem*}
(Frequent Hypercyclicity Criterion, \cite[Theorem 9.9 and Proposition 9.11.]{Linear_chaos}). Let $T$ be an operator on a separable Fr\'echet space $X$. If there is a dense subset $X_0$ of $X$ and a map $S : X_0 \to X_0$ such that, for any $x\in X_0$,
\begin{enumerate}
\item
$\sum\limits ^{\infty}_{n=0}T^nx$ converges unconditionally,
\item
$\sum\limits ^{\infty}_{n=0}S^nx$ converges unconditionally,
\item
$TSx = x$,
\end{enumerate}
then $T$ is frequently hypercyclic. Moreover, $T$ is also chaotic and mixing. In particular, it is also weakly mixing and hypercyclic.
\end{theorem*}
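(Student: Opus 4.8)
The plan is to exhibit one vector that is frequently hypercyclic, to produce separately a dense set of periodic points for chaoticity, and to verify mixing by a direct argument with two open sets; everything rests on the algebraic identities forced by the hypotheses. Since $X$ is separable and $X_0$ is dense, fix a countable dense set $D\subseteq X_0$ and an enumeration $(x_j)_{j\ge1}$ of $D$ in which each element of $D$ occurs infinitely often. From $TSx=x$ on $X_0$ and $S(X_0)\subseteq X_0$ one gets $T^nS^nx=x$, and more generally $T^nS^mx=T^{n-m}x$ when $m\le n$ and $T^nS^mx=S^{m-n}x$ when $m\ge n$. Unconditional convergence of $\sum_{n\ge0}T^nx$ and $\sum_{n\ge0}S^nx$ is used in the Cauchy form: for $x\in X_0$ and $\varepsilon>0$ there is $N(x,\varepsilon)$ with $\bigl\|\sum_{n\in F}T^nx\bigr\|<\varepsilon$ and $\bigl\|\sum_{n\in F}S^nx\bigr\|<\varepsilon$ for every finite $F\subseteq\{N(x,\varepsilon),N(x,\varepsilon)+1,\dots\}$. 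Finally, the combinatorial lemma underlying the criterion (disjoint sets of positive lower density with arbitrarily large prescribed gaps) lets one choose pairwise disjoint $A_j\subseteq\mathbb N$, each of positive lower density, so that whenever $p\in A_j$, $q\in A_k$ and $p\ne q$ one has $|p-q|\ge\rho(\max\{j,k\})$, where $\rho\colon\mathbb N\to\mathbb N$ may be prescribed in advance; I fix $\rho$ increasing and so large that $\rho(j)\ge N(x_j,2^{-j})$ and the tails beyond $\rho(j)$ of both series $\sum_nS^nx_j$ and $\sum_nT^nx_j$ are $<2^{-j}$.

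Put
\[
z=\sum_{j\ge1}\sum_{n\in A_j}S^nx_j .
\]
Each inner sum is a sub-series of the unconditionally convergent $\sum_nS^nx_j$ lying beyond $N(x_j,2^{-j})$, hence of norm $\le2^{-j}$, so $z$ converges in $X$. Fix $j$ and $n\in A_j$ and apply $T^n$ termwise. The summand indexed by $(j,n)$ gives $T^nS^nx_j=x_j$, while any other summand $S^mx_k$ contributes $S^{m-n}x_k$ (if $m>n$) or $T^{n-m}x_k$ (if $m<n$), the exponents $|n-m|$ being distinct within each fixed $k$ and at least $\rho(\max\{j,k\})$; by the Cauchy form of unconditional convergence each such $k$-block has norm at most $2^{-k+1}$, and the standard estimate—the finitely many blocks with $k<j$ each tend to $0$ as $j\to\infty$ while the tail $\sum_{k\ge M}2^{-k+1}$ is uniformly small—yields $\|T^nz-x_j\|\le\delta_j$ with $\delta_j\to0$. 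Consequently, for any nonempty open $U$ choose an element of $D$ well inside $U$ and, among its infinitely many occurrences in $(x_j)$, an index $j$ with $x_j\in U$ and $B(x_j,\delta_j)\subseteq U$; then $\{n:T^nz\in U\}\supseteq A_j$, a set of positive lower density, so $z$ is frequently hypercyclic and in particular $T$ is hypercyclic.

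For chaoticity it remains to produce dense periodic points. Given $x\in X_0$ and $N\in\mathbb N$ put $u=\sum_{k\in\mathbb Z}T^{kN}x$, where $T^{kN}$ for $k<0$ denotes $S^{-kN}$; this converges because it is a sub-family of the two unconditionally convergent series, and $T^Nu=u$ since $T^N$ merely re-indexes the summands (using $T^NS^N=\mathrm{id}$). As $N\to\infty$ every term with $k\ne0$ is a tail of a convergent series and tends to $0$, so $u\to x$; since $X_0$ is dense, periodic points are dense and $T$ is chaotic. For mixing, let $U,V$ be nonempty open, pick $u\in U\cap X_0$ and $v\in V\cap X_0$, and note that for all large $n$ the vector $u+S^nv$ lies in $U$ (a tail of $\sum S^nv$) while $T^n(u+S^nv)=T^nu+v$ lies in $V$ (a tail of $\sum T^nu$); hence $T^nU\cap V\ne\varnothing$ for all large $n$, so $T$ is mixing, a fortiori weakly mixing.

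The crux is the frequent-hypercyclicity step and, inside it, the simultaneous recursive choice of the enumeration $(x_j)$, the thresholds $N(x_j,\cdot)$, the rate $\rho$, and the sets $A_j$, arranged so that the self-contribution and the infinitely many cross-contributions at index $j$ are summably small and force $\delta_j\to0$; proving the combinatorial lemma—disjoint sets of positive lower density with arbitrarily large prescribed separations—is its companion. Granting these, chaoticity and mixing fall out of the same telescoping identities with essentially no extra work.
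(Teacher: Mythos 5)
Your argument is correct and is essentially the standard proof of the Frequent Hypercyclicity Criterion (Bayart--Grivaux, as presented in Grosse-Erdmann--Peris): the paper itself offers no proof here, merely citing \cite[Theorem 9.9 and Proposition 9.11]{Linear_chaos}, and your construction of $z=\sum_j\sum_{n\in A_j}S^nx_j$, the telescoping periodic points $\sum_{k}T^{kN}x$, and the $u+S^nv$ mixing argument are exactly the ingredients of that reference's proof. The only items you defer --- the combinatorial lemma on disjoint sets of positive lower density with prescribed separations (Lemma 9.5 of the same reference), which you correctly flag, and the routine replacement of the norm by an $F$-norm or the defining seminorms of the Fr\'echet space --- are standard and do not affect the argument.
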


The graph below characterizes some relations between these dynamical properties which have been discussed, see \cite{Linear_chaos}:
\vskip1em
\hskip-2em
\begin{tikzpicture}
\node (conI) {Frequently Hypercyclicity};
\node (conII) [below = of conI] {Chaos};
\node (conIII) [Mylong, left = of conII] {Frequent\break Hypercyclicity Criterion};
\node (conIV) [right = of conII] {Weakly Mixing};
\node (conV) [below = of conII] {Mixing};
\node (conVI) [right = of conIV] {Hypercyclicity};

\draw [myarr] (conIII) -- (conII);
\draw [myarr] (conII) -- (conIV);
\draw [myarr] (conI) -- (conIV);
\draw [myarr] (conIII) -- (conI);
\draw [myarr] (conIII) -- (conV);
\draw [myarr] (conV) -- (conIV);
\draw [myarr] (conIV) -- (conVI);
\end{tikzpicture}
\vskip1em

In the above, we describe the general conclusions. In this article, we only focus on the weighted translation operators.

Let $G$ be a locally compact group and $a$ be an element of $G$. The weighted translation operator $T_{a,w}$ is a bounded linear self-map on the Banach space $L^p(G)$ (by using the right Haar measure on $G$), for some $p \in [1, \infty)$, defined by 
\[T_{a,w}(f)(x):=w(x)f(xa^{-1}),\]
where the weight $w$ is a bounded continuous function from $G$ to $(0,\infty )$. We denote $T_{a,1}$ by $T_a$ so that $T_a w$ is the function $w$ translation by $a$, while $T_{a,w}$ is a weighted translation operator.

To analyze $T_{a,w}$, we would like to classify some different topological properties of the elements in $G$. We call an element $a$ of $G$ {\it torsion} if it has finite order. An element $a$ is {\it periodic} if the closed subgroup $G(a)$ generated by $a$ (i.e. $G(a)=\overline{<a>}$) is compact in $G$. An element $a$ is {\it aperiodic} if it is not periodic.

Note that we don’t need $G$ to be Hausdorff in this paper, unless we say that $G$ is a Hausdorff group.

\begin{lemma*} $ $
\cite[Lemma 2.1., C. CHEN AND C-H. CHU]{Hypercyclic_on_groups} An element $a$ in a second countable group $G$ is aperiodic if, and only
if, for each compact subset $K\subseteq G$, there exists $N \in\mathbb{N}$ such that $K\cap Ka^n=\varnothing$ for
$n > N$.
\end{lemma*}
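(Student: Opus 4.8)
The plan is to establish the two implications separately, after recasting the combinatorial condition. Note first that $K\cap Ka^{n}\neq\varnothing$ precisely when $a^{n}\in K^{-1}K$, and $K^{-1}K$ is the image of the compact set $K\times K$ under the continuous map $(x,y)\mapsto x^{-1}y$, hence compact; conversely any compact $L$ lies in $K^{-1}K$ for $K=\{e\}\cup L$. So the condition ``for each compact $K$ there is $N$ with $K\cap Ka^{n}=\varnothing$ for $n>N$'' says exactly that $a^{n}$ \emph{eventually leaves every compact subset of $G$}. One may also assume $G$ is Hausdorff: $N=\overline{\{e\}}$ is a compact normal subgroup, the quotient map $\pi:G\to G/N$ is a proper surjection onto a second countable Hausdorff locally compact group, and both aperiodicity of $a$ and the escape property are inherited by, and reflected from, the image $\pi(a)$.

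One implication is immediate. If the escape property holds then $\{a^{n}:n\ge1\}$ is contained in no compact set, whereas if $a$ were periodic this set would lie in the compact group $\overline{\langle a\rangle}$; hence $a$ is aperiodic. (Directly: if $a$ is periodic, take $K=G(a)=\overline{\langle a\rangle}$; since $a^{n}\in G(a)$ we get $Ka^{n}=G(a)$, so $K\cap Ka^{n}=G(a)\neq\varnothing$ for every $n$, contradicting the hypothesis.)

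For the converse, suppose $a$ is aperiodic, so $H:=\overline{\langle a\rangle}$ is non-compact, and suppose for contradiction that some compact $L$ contains $a^{n_{k}}$ for a strictly increasing sequence $(n_{k})$. Since $G$ is second countable and Hausdorff, $L$ is a second countable compact Hausdorff space, hence sequentially compact, so after passing to a subsequence $a^{n_{k}}\to g$ for some $g\in G$; then $a^{d_{k}}=(a^{n_{k}})^{-1}a^{n_{k+1}}\to e$, where $d_{k}:=n_{k+1}-n_{k}\ge1$. If $a$ had finite order then $H$ would be finite, hence compact, contrary to assumption; so $a$ has infinite order, whence $a^{d_{k}}\neq e$ for all $k$ and $H$ is \emph{not discrete}. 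But $H$ is also locally compact, compactly generated (by $\{a^{-1},e,a\}$), abelian (the closure of an abelian subgroup in a Hausdorff group is abelian), and monothetic, i.e. topologically generated by the single element $a$. By the structure theorem for compactly generated locally compact abelian groups, $H\cong\mathbb{R}^{p}\times\mathbb{Z}^{q}\times C$ with $C$ compact; since none of $\mathbb{R}$, $\mathbb{Z}^{2}$, and $\mathbb{Z}\times C$ (with $C$ non-trivial) admits a dense cyclic subgroup, monotheticity forces $H$ to be either compact or topologically isomorphic to the discrete group $\mathbb{Z}$, and the latter is excluded because $H$ is not discrete. Thus $H$ is compact, contradicting aperiodicity, and the escape property must hold.

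The genuine difficulty is the last step of the converse: deducing that $\langle a\rangle$ is relatively compact from the single recurrence $a^{d_{k}}\to e$. The recurrence by itself does not confine the powers of $a$ — the products $(a^{d_{k}})^{q}$ wander off for $q$ large — so some structural input is unavoidable, and the classification of monothetic locally compact groups (compact, or $\mathbb{Z}$) is the most economical. A more hands-on alternative is to show directly that $\langle a\rangle$ is totally bounded in the right uniformity of the (complete) group $H$, organising the argument around the greatest common divisor of the recurrence exponents $d_{k}$; I expect this to work but to be messier, and would prefer to quote the structure theorem.
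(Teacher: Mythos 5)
Your proof is correct and follows essentially the same route as the paper: the easy direction by taking $K=G(a)$, reduction to the Hausdorff case by quotienting out $\overline{\{e\}}$, and the hard direction by extracting a convergent subsequence of powers of $a$ and invoking the structure theorem $\mathbb{R}^{p}\times\mathbb{Z}^{q}\times C$ for the compactly generated locally compact abelian group $\overline{\langle a\rangle}$ (the paper reads off directly that this group is discrete and isomorphic to $\mathbb{Z}$, while you reach the same dichotomy via the monothetic classification and rule out $\mathbb{Z}$ by non-discreteness). The only imprecision is the parenthetical claim that $\overline{\langle a\rangle}$ is generated by $\{a^{-1},e,a\}$: it is compactly generated, but by $\{a\}$ together with a compact neighbourhood of $e$, which is what the paper's Lemma \ref{secondcompact} takes care to establish.
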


In \cite[Lemma 2.1]{Hypercyclic_on_groups}, they give an equivalence statement of aperiodicity when $G$ is a second countable locally compact Hausdorff group. We give another equivalence statement when $G$ is second countable (Theorem \ref{equaperterminal}), which we define in Definition \ref{defterminal} and we call it {\it terminal pair}.


\begin{lemma*} $ $
\cite[Lemma 1.1., C. CHEN AND C-H. CHU]{Hypercyclic_on_groups} Let $G$ be a locally compact group and let $a\in G$ be a torsion element.
Then any weighted translation $T_{a,w} : L^p(G) \to L^p(G)$ is not hypercyclic, for
$1 \le p <\infty$.
\end{lemma*}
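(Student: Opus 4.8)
The plan is to use the torsion hypothesis $a^{m}=e$ (where $m$ is the order of $a$) to confine every orbit of $T=T_{a,w}$ to a closed, nowhere dense subset of $L^{p}(G)$, so that no vector can have a dense orbit. First I would record the shape of the iterates: an easy induction gives, for $f\in L^{p}(G)$ and $n\ge 1$,
\[
T^{n}f(x)=\Big(\prod_{k=0}^{n-1}w(xa^{-k})\Big)\,f(xa^{-n}),
\]
and, writing $n=qm+r$ with $0\le r<m$ and using $a^{-n}=a^{-r}$, this becomes $T^{n}f=p_{n}\cdot(T_{a^{r}}f)$, where $p_{n}:=\prod_{k=0}^{n-1}(T_{a^{k}}w)$ is a strictly positive function and $T_{a^{r}}f(x)=f(xa^{-r})$. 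In particular $T^{m}=M_{\varphi}$, the operator of multiplication by the bounded continuous function $\varphi:=\prod_{k=0}^{m-1}(T_{a^{k}}w)\colon G\to(0,\infty)$; so, up to strictly positive weights, the whole orbit of $f$ is drawn from the $m$ fixed functions $f,T_{a}f,\dots,T_{a^{m-1}}f$.

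Next, for $g\in L^{p}(G)$ introduce the ``phase set''
\[
C_{g}:=\big\{\,h\in L^{p}(G)\ :\ h(x)\,\overline{g(x)}\ge 0\ \text{for a.e. }x,\ \text{and}\ h(x)=0\ \text{for a.e. }x\in\{g=0\}\,\big\}.
\]
Multiplying $g$ by a nonnegative measurable function lands one in $C_{g}$, so the formula above yields $\operatorname{orb}(T,f)\subseteq\bigcup_{r=0}^{m-1}C_{T_{a^{r}}f}$ for every $f\in L^{p}(G)$. I would then check two properties of $C_{g}$. It is \emph{closed} in $L^{p}(G)$: an $L^{p}$-convergent sequence has an a.e.\ convergent subsequence, and both defining conditions pass to such a limit. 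And it is \emph{nowhere dense}: being closed it suffices to show it has empty interior, and if $g=0$ a.e.\ then $C_{g}=\{0\}$, while if $\mu(\{g\ne 0\})>0$ then $\{g\ne 0\}$ is $\sigma$-finite (since $g\in L^{p}$) and any $h\in C_{g}$ can be pushed off $C_{g}$ by an arbitrarily small $L^{p}$-perturbation that alters the phase (or sign) of $h$ on a small positive-measure subset of $\{g\ne 0\}$. Since a finite union of closed nowhere dense sets is again closed and nowhere dense, in particular a proper subset of the complete metric space $L^{p}(G)$, the orbit $\operatorname{orb}(T,f)$ is never dense and $T_{a,w}$ is not hypercyclic.

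A shorter variant invokes Ansari's theorem: if $T_{a,w}$ were hypercyclic then so would be $T_{a,w}^{m}=M_{\varphi}$, and for any hypercyclic vector $f$ of $M_{\varphi}$ the single set $C_{f}$ — closed and, by the above, proper — already contradicts density of $\{\varphi^{n}f:n\in\mathbb{N}\}$; this removes the finite-union bookkeeping at the cost of an external citation. The one step requiring genuine care — the \emph{main obstacle} — is the measure-theoretic claim that $C_{g}$ is nowhere dense, i.e.\ that the phase of an $L^{p}$-function can be perturbed on a small set and that the perturbed function really leaves $C_{g}$; closedness and the algebraic identities are routine. It is worth emphasizing that the argument uses neither invertibility of $T_{a,w}$ nor any lower bound on $w$: only positivity of $w$ (hence of each $p_{n}$) together with $a^{m}=e$.
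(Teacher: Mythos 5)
The paper itself gives no proof of this lemma; it is quoted from Chen and Chu, so there is nothing internal to compare against and your argument must be judged on its own merits. It is correct in outline and captures the right obstruction: with $a^m=e$ and $w>0$, every iterate $T^nf=p_n\cdot T_{a^r}f$ (where $n\equiv r \bmod m$ and $p_n>0$) has the same pointwise phase as one of the $m$ translates of $f$, so the orbit is trapped in $\bigcup_{r=0}^{m-1}C_{T_{a^r}f}$, and your verification that each $C_g$ is closed is fine. The one step that is genuinely underspecified is the nowhere-density of $C_g$: ``perturb on a small positive-measure subset of $\{g\ne0\}$'' tacitly assumes the Haar measure is non-atomic. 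For discrete $G$ you must instead perturb on an atom where $h$ is already small (such an atom exists because $h\in L^p$, unless $G$ is finite), and the finite-group case has to be disposed of separately (no hypercyclic operators on nonzero finite-dimensional spaces, or directly: $C_g$ is then a finite product of closed rays in $\mathbb{C}^{|G|}$, which has empty interior). These are routine repairs, not gaps in the idea. I would add that your ``shorter variant'' is in fact the cleanest complete proof and needs none of this bookkeeping: by Ansari's theorem $T^m=M_\varphi$ with $\varphi>0$ would be hypercyclic with some hypercyclic vector $f\ne0$, yet its orbit $\{\varphi^nf : n\in\mathbb{N}\}$ lies in the closed set $C_f$, which is not dense because it misses $-f$ (since $(-f)\overline{f}=-|f|^2<0$ on the positive-measure set $\{f\ne0\}$); no Baire category or nowhere-density argument is required there.
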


On the other hand, \cite[Lemma 1.1]{Hypercyclic_on_groups} gives the non-existence of hypercyclic weighted translations when the element $a$ is torsion. So the question of the existence of hypercyclic weighted translations will be focused on the case in which $a$ is non-torsion, this is, $a$ is either non-torsion periodic or $a$ is aperiodic.

All examples of hypercyclic weighted translation operators in previously known literature are all associated to aperiodic $a$.
One of the most important concrete examples is the weighted backward shift operator on ${\ell}^p(\mathbb{Z})$ \cite[Example 4.15. p.102]{Linear_chaos} and there are also some classical analogous examples relevant to semigroups \cite{semigroups}; in fact, it can correspond to our case for several admissible weights by conjugation.
So we unify them to our Main Theorem \ref{main}.

\begin{theorem*}[Main Theorem]
Let $G$ be a second countable locally compact group and $a$ be an aperiodic element in $G$, then there exists a weighted translation operator $T_{a,w}$ which is mixing, choatic and frequently hypercyclic on $L^p(G)$ for all $p\in [1,\infty)$, simultaneously.
\end{theorem*}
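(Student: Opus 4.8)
The plan is to verify the hypotheses of the Frequent Hypercyclicity Criterion for a suitably chosen weight $w$, since that criterion yields mixing, chaos and frequent hypercyclicity (hence also weak mixing and hypercyclicity) all at once, and the weight we build will work for every $p\in[1,\infty)$ simultaneously. Take $X_0=C_c(G)$, which is dense in the separable Banach space $L^p(G)$ because $G$ is second countable, and let $S:C_c(G)\to C_c(G)$ be the formal inverse $Sf(x)=w(xa)^{-1}f(xa)$. Since $w$ is continuous and strictly positive, both $T_{a,w}$ and $S$ map $C_c(G)$ into itself (the support of $T_{a,w}f$ is $(\operatorname{supp}f)a$, that of $Sf$ is $(\operatorname{supp}f)a^{-1}$), and a direct computation gives $T_{a,w}Sf=f$, so condition (3) holds. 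A change of variables using right-invariance of Haar measure gives $T_{a,w}^nf(x)=\big(\prod_{k=0}^{n-1}w(xa^{-k})\big)f(xa^{-n})$ and $S^nf(x)=\big(\prod_{k=1}^{n}w(xa^{k})^{-1}\big)f(xa^{n})$, whence, for $f$ supported in a compact set $K$,
\[
\|T_{a,w}^nf\|_p\le\Big(\sup_{y\in K}\prod_{i=1}^{n}w(ya^i)\Big)\|f\|_p,\qquad
\|S^nf\|_p\le\Big(\sup_{y\in K}\prod_{j=0}^{n-1}w(ya^{-j})^{-1}\Big)\|f\|_p .
\]
Because absolute convergence implies unconditional convergence in a Banach space, conditions (1) and (2) of the Criterion will follow once $w$ is chosen so that both series $\sum_n\sup_{y\in K}\prod_{i=1}^{n}w(ya^i)$ and $\sum_n\sup_{y\in K}\prod_{j=0}^{n-1}w(ya^{-j})^{-1}$ are finite for every compact $K\subseteq G$.

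So the whole problem is reduced to constructing a single bounded continuous $w:G\to(0,\infty)$, bounded away from $0$ and $\infty$ (say with values in $[1/4,4]$, which in particular makes $T_{a,w}$ bounded on $L^p(G)$), with the above two summability properties, and this is where aperiodicity is used. Using the terminal pair characterization of aperiodicity (Theorem \ref{equaperterminal}) together with second countability, I would set up an exhaustion of $G$ by compact sets that is adapted to $a$ and define $w$ so that $w\le 1/2$ holds far in the forward $a$-direction and $w\ge 2$ holds far in the backward $a$-direction; precisely, so that for every compact $K$ there is $N=N(K)$ with $w\le 1/2$ on $Ka^n$ and $w\ge 2$ on $Ka^{-n}$ for all $n\ge N$. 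Aperiodicity is exactly the property that makes these two prescriptions compatible: by the aperiodicity lemma of C.~Chen and C.-H.~Chu recalled above (equivalently, by the terminal pair description) the translates $Ka^n$ eventually escape any prescribed compact region as $|n|\to\infty$, so the forward and backward regions are disjoint outside a compact set and can be joined by a continuous interpolation taking values in $[1/4,4]$.

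Granting such a weight, the two series are dominated by geometric tails: for a fixed compact $K$ the first $N(K)$ factors of each product are at most $4^{N(K)}$, while every subsequent factor is $\le 1/2$, so each supremum series is bounded by a constant times $\sum_n 2^{-n}<\infty$. Hence the Frequent Hypercyclicity Criterion applies to $T_{a,w}$ on $L^p(G)$ for each $p\in[1,\infty)$, and by the ``moreover'' part $T_{a,w}$ is simultaneously frequently hypercyclic, chaotic and mixing, as claimed; note that the weight $w$ did not depend on $p$.

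I expect the genuine difficulty to be the construction of the global weight in the second paragraph: one has to convert the escape/terminal property of $a$ into an honest continuous and bounded function that decays in one orbit direction and grows in the other, uniformly over compact sets, and it is precisely for this step that the reformulations of aperiodicity developed in the first part of the paper are needed. In the most transparent situations --- for instance when $G$ carries a continuous ``height'' coordinate $h$ with $h(xa)=h(x)+1$ along $a$-orbits --- one can write $w$ down explicitly as a smoothed step function of $h$, recovering the classical weighted backward shift on $\ell^p(\mathbb{Z})$ as a special case.
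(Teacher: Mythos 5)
Your proposal is correct and follows essentially the same route as the paper: reduce aperiodicity to the terminal-pair characterization (Theorem \ref{equaperterminal}), use the disjoint closed sets $A,B$ to build (via Urysohn-type interpolation) a weight with $w\le\tfrac12$ on forward translates and $w\ge 2$ on backward translates, and verify the Frequent Hypercyclicity Criterion on compactly supported functions with $S$ the formal inverse, exactly as in the paper's Lemma \ref{existencelemma}. Your direct geometric bound on $\sum_n\|T^n\varphi\|_p$ is in fact a slightly cleaner way to get unconditional convergence than the paper's disjoint-support decomposition, but the substance is identical.
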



\section{Equivalent statements of aperiodicity} 
\label{sec:equivalent_statements_of_aperiodicity}

\begin{proposition}
\label{homoaper}
Let $G,G'$ be locally compact groups, $\phi (a)$ is an aperiodic element of $G'$, where $\phi :G \to G'$ is a continuous homomorphism, then $a$ is also an aperiodic element of $G$. In other words, continuous homomorphisms pullback the aperiodicity.
\end{proposition}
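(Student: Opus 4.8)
The strategy is to argue by contraposition: I will assume $a$ is periodic in $G$ and deduce that $\phi(a)$ is periodic in $G'$. By the definition recalled in the introduction this amounts to showing that $G'(\phi(a)) = \overline{\langle \phi(a)\rangle}$ is compact, under the hypothesis that $G(a) = \overline{\langle a\rangle}$ is compact.

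First I would transport the hypothesis across $\phi$. Since $\phi$ is a continuous homomorphism, $H := \phi(G(a))$ is a compact subgroup of $G'$ (a continuous image of a compact set is compact), and since $\phi(\langle a\rangle) = \langle \phi(a)\rangle$ we get $\langle \phi(a)\rangle \subseteq H$, hence $\overline{\langle\phi(a)\rangle} \subseteq \overline{H}$. If $G'$ were Hausdorff the proof would end here, because then the compact set $H$ is closed, so $\overline{\langle\phi(a)\rangle}$ is a closed subset of the compact set $H$ and is therefore compact. To cover the general (possibly non-Hausdorff) case I would invoke the standard fact that in a locally compact group the closure $N := \overline{\{e\}}$ of the identity is a \emph{compact} normal subgroup: it equals the intersection of all neighborhoods of $e$, hence lies inside any compact neighborhood of $e$, and a closed subset of a compact set is compact (no separation axiom is needed for this last step). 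Passing to the Hausdorff quotient $\pi : G' \to G'/N$, the set $\pi(H)$ is compact, hence closed, so $\overline{H} \subseteq \pi^{-1}(\pi(H)) = HN$; and $HN$ is compact, being the image of $H \times N$ under the continuous multiplication map. Thus $\overline{\langle\phi(a)\rangle}$ is a closed subset of the compact set $HN$, hence compact, so $\phi(a)$ is periodic and the contrapositive is established.

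I do not expect a genuine obstacle here: the argument is a short chain of the elementary facts that continuous images of compact sets are compact and closed subsets of compact sets are compact, together with the identity $\langle\phi(a)\rangle = \phi(\langle a\rangle)$. The only mild subtlety, and the reason the proof is not a single line, is that $G'$ is not assumed Hausdorff, so the familiar implication ``compact $\Rightarrow$ closed'' is unavailable; routing through $N = \overline{\{e\}}$ repairs this. Alternatively, one may reduce to the Hausdorff case at the outset by replacing $G$ and $G'$ by $G/\overline{\{e\}}$ and $G'/\overline{\{e\}}$ (under which periodicity and aperiodicity of $a$ and of $\phi(a)$ are unaffected, since $\overline{\{e\}}$ is compact), in which case the second paragraph collapses to its first sentence.
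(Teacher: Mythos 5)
Your proof is correct and takes essentially the same route as the paper's: argue the contrapositive by pushing the compact group $\overline{\langle a\rangle}$ forward through $\phi$ and concluding that $\phi(a)$ lies in a compact subgroup of $G'$, hence is periodic. The paper's version is a one-line sketch that silently elides the non-Hausdorff subtlety (compact need not be closed, so one must still see why $\overline{\langle\phi(a)\rangle}$ is compact); your detour through the compact normal subgroup $N=\overline{\{e\}}$ fills exactly that gap and is consistent with the Hausdorffication machinery the paper develops immediately after this proposition.
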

\begin{proof}
If $a$ is periodic, then $\overline{<a>}$ is a compact group and so does $\phi(\overline{<a>})$, but $\phi (a)\in \phi(\overline{<a>})$, contradiction as there are no aperiodic element in compact group.
\end{proof}

Let $G$ be a topological group. Define the {\it Hausdorffication} of $G$ by the natural continuous quotient map $\pi :G\to \widetilde{G}$, where $\widetilde{G}:=G/\overline{\{e\}}$ and $e$ denotes the identity element of $G$. (This may be related to the ``Hausdorffication'' which is defined as a left adjoint of forget functor in general topology). The reason that we consider the Hausdorffication is lots of statements in this paper without the assumption that $G$ is Hausdorff; but in fact, we first assume $G$ is Hausdorff in proving. Next, we prove the non-Hausdorff case by considering its Hausdorffication, and show it can preserve or pullback some conclusion we want. So we will claim several arguments below:
\begin{enumerate}
\item
Each open or closed subset of $G$ is a union of the cosets of $\overline{\{e\}}$.

Since $\overline{S}x=\overline{Sx}$ for any $S\subseteq G$ and $x\in G$. Choose $S=\{e\}$ and $x\in \overline{\{e\}}$, then  $\overline{\{x\}}=\overline{\{e\}x}=\overline{\{e\}}x=\overline{\{e\}}$ (last equality follows by $x\in \overline{\{e\}}$ and $\overline{\{e\}}$ is a subgroup of $G$), which implies $\overline{\{e\}}$ is indiscrete topology space and so does all cosets of $\overline{\{e\}}$. This implies the statement we want.

\item
There's a one to one correspondence between the set of open(closed) of $G$ and $\widetilde{G}$'s. In particular, $G$ is first(second) countable if and only if $\widetilde{G}$ also, and $\widetilde{G}$ is Hausdorff.

This correspondence is given by $(U \mapsto \pi (U))$ and $(\widetilde{U}\mapsto \pi^{-1}(\widetilde{U}))$ for $U$ is open in $G$ and $\widetilde{U}$ is open in $\widetilde{G}$. To verify that these two maps compose to identity for two sides, we only need to say $\pi^{-1}\pi(U)\subseteq U$ for any $U$ is open in $G$, since the others are relatively obvious. Assume there exists $x\in \pi^{-1}\pi(U)\setminus U$, then $\pi(x)\in \pi(U)$, so there is some $y\in U$ such that $\pi(x)=\pi(y)$ (so $\pi(xy^{-1})=\pi(e)$, then $xy^{-1}\in \ke \pi =\overline{\{e\}}$, then $x\in \overline{\{e\}}y$). But by (1), $\overline{\{e\}}y\subseteq U$, since $y\in U$, a contradiction as this implies $x\in U \cap \pi^{-1}\pi(U)\setminus U=\varnothing$.

\item
$\pi$ is an open, closed and proper mapping. In particular, $G$ is locally compact iff $\widetilde{G}$ also. On the other hand, the one to one correspondence in (2), not only just closed sets, but also closed compact sets (since $\pi$ is proper).

The openness and closedness follow by (1) and (2) immediately. For the properness, let $\widetilde{K}$ be compact in $\widetilde{G}$, and we'll check $\pi^{-1}(\widetilde{K})$ also. Let $\{U_{\alpha}\}$ be a open covering of $\pi^{-1}(\widetilde{K})$, so by correspondence, $\{\pi(U_{\alpha})\}$ be a open covering of $\widetilde{K}$. Hence induce a finite subcovering $\{\pi(U_{i})\}$, then we can check $\{U_{i}\}$ is actually a finite subcovering of $\pi^{-1}(\widetilde{K})$.

\item
Let $Y$ be an arbitrary Hausdorff topology space, then $\Hom(G,Y)\cong \Hom(\widetilde{G},Y)$. (i.e. There's a natural one to one correspondence between the set of continuous functions from $G$ to $Y$ and $\widetilde{G}$'s.)

Given $w\in \Hom(G,Y)$, define $\widetilde{w}(\widetilde{x}):=w(x)$ where $\widetilde{x}=\overline{\{e\}}x$, then $\widetilde{w}$ is a well-defined continuous function on $\widetilde{G}$. On the other hand, Given $\widetilde{w}\in \Hom(\widetilde{G},Y)$, then we get $w:=\widetilde{w} \circ \pi$, which is also a well-defined continuous function on $G$. It easy to check these two mappings between $\Hom(G,Y)$ and $\Hom(\widetilde{G},Y)$ are inverse to each others. (Equivalently, one can say that any continuous function from $G$ to $Y$ factor through $\widetilde{G}$. In other words, this is the universal property of Hausdorffication.)

\item
$a$ is an aperiodic in $G$ iff $\pi(a)$ is an aperiodic in $\widetilde{G}$.

If $\pi(a)$ is a periodic, then $\overline{<\pi (a)>}$ is compact and so does $\pi^{-1}(\overline{<\pi (a)>})$, since $\pi $ is proper, but $a\in \pi^{-1}(\overline{<\pi (a)>})$. There is a contradiction. The other side follows from Proposition \ref{homoaper} immediately.
\end{enumerate}

The idea of the proofs of Proposition \ref{runawayaper}, Lemmas \ref{secondcompact} and \ref{discrete} and Proposition \ref{aperrunaway} follows from \cite[Lemma 2.1]{Hypercyclic_on_groups}.

\begin{proposition}
\label{runawayaper}
Let $G$ be a locally compact group, $a\in G$ has the following properties: For any compact subset $K$ of $G$, there exists $N\in \mathbb{N}$ such that $K\cap Ka^n = \varnothing$ for $n>N$. Then $a$ is an aperiodic element in $G$.
\end{proposition}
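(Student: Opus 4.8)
The plan is to prove the contrapositive: assuming $a$ is periodic, I will exhibit a compact set $K$ for which no such $N$ exists. If $a$ is periodic, the closed subgroup $G(a) = \overline{\langle a\rangle}$ is compact, so I take $K = G(a)$. Since $a \in G(a)$ and $G(a)$ is a subgroup, we have $Ka^n = G(a)a^n = G(a) = K$ for every $n \in \mathbb{N}$. Hence $K \cap Ka^n = K \neq \varnothing$ for all $n$ (note $K$ is nonempty as it contains $e$), which directly contradicts the hypothesis that $K \cap Ka^n = \varnothing$ for all sufficiently large $n$. Therefore $a$ cannot be periodic, i.e. $a$ is aperiodic.

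There is essentially no obstacle here: the argument is a one-line consequence of the definition of periodicity together with the absorbing property of a subgroup under right translation by its own elements. The only point worth a word of care is that we do not need $G$ to be Hausdorff — compactness of $G(a)$ is exactly the definition of "$a$ periodic" that the paper uses, and the identity $G(a)a^n = G(a)$ holds at the level of sets regardless of separation axioms. So the proof can be stated in two sentences with no reduction to the Hausdorff case and no appeal to the Hausdorffification machinery developed above; it is strictly the converse direction to Proposition~\ref{runawayaper}'s hypothesis being a strengthening of aperiodicity, and together the two propositions will pin down that (for second countable $G$) the "runaway" condition is equivalent to aperiodicity, matching the cited Lemma~2.1 of \cite{Hypercyclic_on_groups}.
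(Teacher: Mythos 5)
Your argument is correct and is essentially identical to the paper's own proof: both take $K = G(a)$ when $a$ is periodic and observe that $Ka^n = G(a)$ for all $n$, so $K \cap Ka^n$ is never empty. Nothing to add.
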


\begin{proof}
Suppose $a$ is a periodic element, so the closed subgroup $G(a)$ generated by $a$ is compact. Now set $K=G(a)$, then we have $K\cap Ka^n =G(a) \neq \varnothing$ for all $n\in \mathbb{Z}$, which is a negative statement of the condition.
\end{proof}

\begin{lemma}
\label{secondcompact}
Let $G$ be a first countable locally compact Hausdorff group with $a\in G$ being an aperiodic element, then $G(a)$ is a second countable compactly generated abelian group.
\end{lemma}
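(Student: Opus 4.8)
The plan is to establish the three asserted properties of $G(a)=\overline{\langle a\rangle}$ separately, the common observation being that $G(a)$ is by definition a \emph{closed} subgroup of $G$, hence is itself a first countable locally compact Hausdorff topological group, and that $\langle a\rangle=\{a^n:n\in\mathbb{Z}\}$ is a countable dense subgroup of it. For abelianness, recall that $\langle a\rangle$ is abelian and that the commutator map $(x,y)\mapsto xyx^{-1}y^{-1}$ is continuous; since $G$ is Hausdorff, $\{(x,y)\in G\times G:xyx^{-1}y^{-1}=e\}$ is closed, and it contains $\langle a\rangle\times\langle a\rangle$, hence its closure $G(a)\times G(a)$, so $G(a)$ is abelian. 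For second countability, note that $G(a)$ is Hausdorff and first countable, hence metrizable by the Birkhoff--Kakutani theorem; since $\langle a\rangle$ is a countable dense subset, $G(a)$ is separable metrizable, and therefore second countable.

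For compact generation, use that $G(a)$, being a closed subgroup of a locally compact Hausdorff group, is locally compact, so we may fix a symmetric compact neighborhood $V$ of the identity inside $G(a)$ (intersect an arbitrary compact neighborhood with its inverse). Then $H_0:=\bigcup_{n\ge 1}V^n$ is an open subgroup of $G(a)$, because it contains the nonempty interior of $V$ and is a union of left translates of that open set; an open subgroup is also closed. Consequently $H:=\langle V\cup\{a,a^{-1}\}\rangle$, which contains $H_0$, is an open, hence closed, subgroup of $G(a)$ containing $\langle a\rangle$, so $H\supseteq\overline{\langle a\rangle}=G(a)$, while obviously $H\subseteq G(a)$. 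Thus $G(a)=\langle V\cup\{a,a^{-1}\}\rangle$ is generated by the compact set $V\cup\{a,a^{-1}\}$.

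I do not expect a serious obstacle here: each step rests on standard facts of topological group theory (Birkhoff--Kakutani metrization, ``separable metric $\Rightarrow$ second countable'', openness of the subgroup generated by a neighborhood of $e$). The one point worth flagging is that aperiodicity of $a$ is not actually needed for this particular lemma; it is stated because the whole section concerns aperiodic elements, and the real role of the lemma is to present $G(a)$ as a second countable compactly generated locally compact abelian group, so that Pontryagin's structure theorem applies and gives $G(a)\cong\mathbb{R}^r\times\mathbb{Z}^s\times K$ with $K$ compact and second countable. Aperiodicity will enter only afterwards, to guarantee that $G(a)$ is non-compact, i.e.\ that $r+s\ge 1$.
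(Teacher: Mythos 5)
Your proof is correct, and for the substantive part of the lemma it takes a genuinely different route from the paper's. The abelianness and second countability arguments coincide with the paper's in all essentials (the paper proves commutativity ``by net argument'' and likewise invokes Birkhoff--Kakutani plus separability of the metrizable group $G(a)$). For compact generation, however, the paper first invokes Hewitt--Ross Theorem 5.14 to embed $G(a)$ in a compactly generated subgroup $G'=\langle \overline{V_0}\rangle$ of $G$, and then — explicitly warning that closed subgroups of compactly generated groups need not be compactly generated — shows that the translates $V_1a^j$ with $V_1=V_0\cap G(a)$ cover $G(a)$, a covering claim that needs the separate remark following the lemma to justify. Your argument works intrinsically in the locally compact group $G(a)$: the subgroup generated by a compact symmetric neighborhood $V$ of $e$ together with $a^{\pm 1}$ is open, hence closed, hence (containing the dense subgroup $\langle a\rangle$) all of $G(a)$. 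This is shorter, avoids the external reference and the auxiliary covering remark entirely, and isolates the clean general fact that a locally compact group with a dense cyclic (or compactly generated) subgroup is compactly generated. Your closing observation is also accurate: aperiodicity plays no role in this lemma's conclusion — the paper only uses it to record that $G(a)$ is non-compact, which matters first in the following lemma when the structure theorem for compactly generated locally compact abelian groups is applied.
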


\begin{proof}
Since $a$ is an aperiodic element, so $G(a)$ is non-compact closed abelian subgroup of $G$ (the commutativity follows by net arguement and $G(a)$ is Hausdorff.). Moreover, by \cite[Theorem 5.14]{E_Hewitt}, there exists a compactly generated subgroup $G'$ of $G$ which containing $G(a)$, since $\left\{e,a\right\}$ is a compact subset of $G$.

First, we'll check that it is second countable. Since $G(a)$ is a first countable locally compact Hausdorff group, hence metrizable \cite[Birkhoff-Kakutani metrizable theorem]{Birkhoff}. On the other hand, since the set $\left\{a^j\right\}_{j\in \mathbb{Z}}$ is dense in $G(a)$, so $G(a)$ is a separable metrizable space, hence second countable.

So in the last step, we'll check that it is compactly generated. There's a important thing that not every subgroup of a compactly generated group is compactly generated (even a closed subgroup), but in our case it works. Let $G'$ be generated by the compact set $K_0$, where $K_0=\overline{V_0}$ for some open $V_0$ containing $\left\{e,a\right\}$ (the existence follows from the proof in \cite[Theorem 5.14]{E_Hewitt}). We'll claim that $G(a)$ is generated by the compact set $K_1$, where $K_1=\overline{V_1}$ and $V_1=V_0\cap G(a)$. Since the set $\left\{a^j\right\}_{j\in \mathbb{Z}}$ is dense in $G(a)$, which implies $\left\{V_1a^j\right\}_{j\in \mathbb{Z}}$ is a covering of $G(a)$ (we'll prove this in the Remark below). So $G(a)\subseteq \cup_{j\in \mathbb{Z}}V_1a^j\subseteq \cup_{j\in \mathbb{Z}}K_1a^j\subseteq G(a)$ (last step follows from $K_1\subseteq G(a)$), that means that any $x\in G(a)$ can write it as the form $ka^j$ for some $k\in K_1$ and some $j\in \mathbb{Z}$ (i.e. $G(a)$ is compactly generated, since $a$ is also in $K_1$).
\end{proof}
\begin{remark}
To prove that $\left\{V_1a^j\right\}_{j\in \mathbb{Z}}$ is a covering of $G(a)$, we need to say that every $x\in G(a)$ which is a limit of a subsequence $\left\{a^{n_k}\right\}$ has been covered by $\left\{V_1a^{n_k}\right\}$ (we view $V_1$ as the relative open neighborhood of $e$ in $G(a)$ here). Now choose $V_2$ the symmetric open neighborhood of $e$ in $V_1$. Then $a^{n_k}\in V_2x$ for $k$ large enough, hence $x\in V_2^{-1}a^{n_k}=V_2a^{n_k}\subseteq V_1a^{n_k}$, so we are done.
\end{remark}

\begin{lemma}
\label{discrete}
Let $G$ be a first countable locally compact Hausdorff group with $a\in G$ being an aperiodic element, then $G(a)$ is topologically isomorphic to $\mathbb{Z}$.
\end{lemma}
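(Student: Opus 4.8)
The plan is to feed Lemma~\ref{secondcompact} into the structure theory of compactly generated locally compact abelian groups and then exploit the fact that $G(a)$ is \emph{monothetic}: the single element $a$ generates the dense subgroup $\langle a\rangle$. By Lemma~\ref{secondcompact}, $G(a)$ is a second countable, compactly generated, abelian, locally compact Hausdorff group, so the structure theorem for such groups (see \cite{E_Hewitt}) furnishes a topological isomorphism
\[
G(a)\;\cong\;\mathbb{R}^m\times\mathbb{Z}^n\times K
\]
for some integers $m,n\ge 0$ and some compact group $K$. It then remains to show that monotheticity, together with the non-compactness of $G(a)$, forces $m=0$, $n=1$ and $K=\{e\}$.

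First I would note that monotheticity passes to Hausdorff quotients: if $q\colon G(a)\to Q$ is a continuous surjective homomorphism, then $Q=q(\overline{\langle a\rangle})\subseteq\overline{\langle q(a)\rangle}\subseteq Q$, so $Q=\overline{\langle q(a)\rangle}$ is again monothetic. Since $\{0\}^m\times\mathbb{Z}^n\times K$ is a closed subgroup, $\mathbb{R}^m$ is such a quotient of $G(a)$, hence monothetic; but a cyclic subgroup of $\mathbb{R}^m$ is discrete, hence not dense unless $m=0$, so $m=0$. Likewise $\mathbb{Z}^n$ is a quotient of $G(a)\cong\mathbb{Z}^n\times K$, hence monothetic; as $\mathbb{Z}^n$ is discrete, a dense cyclic subgroup must equal all of $\mathbb{Z}^n$, which is impossible for $n\ge 2$, so $n\le 1$. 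Because $a$ is aperiodic, $G(a)$ is non-compact by definition, so $G(a)\not\cong K$ and hence $n\ne 0$; thus $n=1$ and $G(a)\cong\mathbb{Z}\times K$.

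For the last step I would show that $\mathbb{Z}\times K$ being monothetic forces $K=\{e\}$. If $(j,k)$ generates a dense subgroup, then projecting onto $\mathbb{Z}$ shows $j\mathbb{Z}$ is dense in the discrete group $\mathbb{Z}$, so $j=\pm1$; replacing $(j,k)$ by its inverse if necessary we may assume $j=1$, whence $\langle(j,k)\rangle=\{(i,k^i):i\in\mathbb{Z}\}$. Since $\{i\}\times K$ is open in $\mathbb{Z}\times K$ and $W\cap\overline{A}\subseteq\overline{W\cap A}$ for any open set $W$ and any set $A$, we get
\[
\{i\}\times K\;=\;(\{i\}\times K)\cap\overline{\langle(j,k)\rangle}\;\subseteq\;\overline{\{(i,k^i)\}}\;=\;\{(i,k^i)\},
\]
the last equality because $K$ is Hausdorff; hence $K$ is trivial and $G(a)\cong\mathbb{Z}$.

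I expect the only real subtlety to be bookkeeping: checking that each factor in the decomposition is genuinely a Hausdorff quotient of $G(a)$ by a closed subgroup, so that monotheticity transports, and being careful that a ``dense cyclic subgroup'' of a discrete group is the whole group. Everything else — the structure theorem and the elementary observations that $\mathbb{R}$ and $\mathbb{Z}^2$ fail to be monothetic — is standard. As an alternative that would shortcut the middle two paragraphs, one could instead quote directly the classical fact that a non-compact monothetic locally compact group is topologically isomorphic to $\mathbb{Z}$.
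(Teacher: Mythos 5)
Your proof is correct, and it departs from the paper's after the first step. Both arguments begin identically: feed Lemma~\ref{secondcompact} into the Hewitt--Ross structure theorem to write $G(a)\cong\mathbb{R}^m\times\mathbb{Z}^n\times K$ with $K$ compact. The paper then finishes in one stroke: since $G(a)$ is non-compact, $a$ cannot lie in $\{0\}\times\{0\}\times K$, so the powers $a^j$ have unbounded image in the $\mathbb{R}^m\times\mathbb{Z}^n$ coordinates and hence $\langle a\rangle$ has no accumulation points; therefore $G(a)=\overline{\langle a\rangle}=\langle a\rangle$ is an infinite discrete cyclic group, i.e.\ topologically $\mathbb{Z}$. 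You never show $\langle a\rangle$ is discrete; instead you transport monotheticity to Hausdorff quotients and eliminate the factors one at a time ($m=0$ since $\mathbb{R}^m$ is not monothetic for $m\ge1$; $n\le1$ since $\mathbb{Z}^n$ is not cyclic for $n\ge2$; $n=1$ by non-compactness; $K=\{e\}$ by your open-slice argument in $\mathbb{Z}\times K$). Each of these steps checks out, including the final one, where $(\{i\}\times K)\cap\langle(1,k)\rangle=\{(i,k^i)\}$ and the inclusion $W\cap\overline{A}\subseteq\overline{W\cap A}$ for open $W$ collapses $K$ to a point. The trade-off: the paper's route is shorter but leaves the ``escapes every compact set, hence no accumulation points, hence discrete'' chain implicit, while yours is longer but makes every elimination explicit and, as you observe, could be replaced wholesale by the classical fact that a non-compact monothetic locally compact Hausdorff group is topologically isomorphic to $\mathbb{Z}$.
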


\begin{proof}
So by previous lemma and \cite[Theorem 9.8]{E_Hewitt}, $G(a)\cong \mathbb{R}^n\times \mathbb{Z}^m\times \mathbb{F}$ for some $n,m\in \mathbb{N}$ and $\mathbb{F}$ is a compact group, and $a$ identifies with the element in $\mathbb{R}^n\times \mathbb{Z}^m\times \mathbb{F}\setminus \left(\{0\}\times \{0\}\times \mathbb{F}\right)$ and $<a>$ the cyclic subgroup generated by $a$ will not have any accumulation points. That means $G(a)$ is actually a discrete group, hence $G(a)=<a>$, which is also isomorphic to $\mathbb{Z}$.
\end{proof}

\begin{proposition}
\label{aperrunaway}
Let $G$ be a first countable locally compact group with $a\in G$ being an aperiodic element, then for any compact subset $K$ of $G$, there exists $N\in \mathbb{N}$ such that $K\cap Ka^n = \varnothing$ for $n>N$.
\end{proposition}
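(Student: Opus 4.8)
The plan is to reduce to the Hausdorff case via the Hausdorffication $\pi\colon G\to\wt G$, and then, for $G$ Hausdorff, to combine Lemma \ref{discrete} with an elementary compactness argument. For the reduction, assume the proposition is already known whenever the ambient group is Hausdorff, and let $G$ be first countable and locally compact with $a$ aperiodic. By items (2) and (3) above, $\wt G$ is again first countable, locally compact, and now Hausdorff; by item (5), $\pi(a)$ is aperiodic in $\wt G$; and $\pi(K)$ is compact in $\wt G$, being a continuous image of the compact set $K$. Applying the Hausdorff case in $\wt G$ yields $N$ with $\pi(K)\cap\pi(K)\pi(a)^n=\varnothing$ for all $n>N$. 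Since $\pi(K\cap Ka^n)\subseteq\pi(K)\cap\pi(Ka^n)=\pi(K)\cap\pi(K)\pi(a)^n$ and $K\cap Ka^n\subseteq\pi^{-1}\bigl(\pi(K\cap Ka^n)\bigr)$, we get $K\cap Ka^n\subseteq\pi^{-1}(\varnothing)=\varnothing$ for all $n>N$. So it suffices to treat the case in which $G$ is Hausdorff.

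Now let $G$ be Hausdorff. By Lemma \ref{discrete}, $G(a)=\langle a\rangle$ is topologically isomorphic to $\mathbb Z$; in particular $a$ has infinite order, $G(a)$ is closed in $G$ (it is $\overline{\langle a\rangle}$), and $G(a)$ carries the discrete topology as a subspace of $G$. Suppose the conclusion fails for some compact $K$, so that $K\cap Ka^n\neq\varnothing$ for all $n$ in some infinite set $S\subseteq\mathbb N$. For each $n\in S$ pick $y,z\in K$ with $z=ya^n$; then $a^n=y^{-1}z\in K^{-1}K$, so $\{a^n:n\in S\}\subseteq K^{-1}K\cap G(a)$. But $K^{-1}K\cap G(a)$ is closed in the compact set $K^{-1}K$, hence compact, while also being a subspace of the discrete group $G(a)$, hence finite; since $a$ has infinite order, $\{a^n:n\in S\}$ is infinite, a contradiction. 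Therefore $K\cap Ka^n=\varnothing$ for all but finitely many $n\in\mathbb N$, which is exactly the assertion (take $N$ to be the largest such $n$, or $N=0$ if there is none).

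The substantive input here is Lemma \ref{discrete} (and, behind it, the structure theory of compactly generated locally compact abelian groups used in Lemmas \ref{secondcompact} and \ref{discrete}); granting that $G(a)\cong\mathbb Z$, the argument amounts to the remark that a discrete closed subgroup can meet a relatively compact set in only finitely many points. I expect that lemma to be the only real obstacle: once the structural results are in place, the remainder is the compactness bookkeeping above, together with the routine observation that pushing forward along $\pi$ cannot destroy the emptiness of $K\cap Ka^n$, which is immediate from $A\subseteq\pi^{-1}\pi(A)$.
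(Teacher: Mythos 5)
Your proposal is correct and follows essentially the same route as the paper: reduce to the Hausdorff case via the Hausdorffication $\pi:G\to\widetilde{G}$ using $K\cap Ka^n\subseteq\pi^{-1}(\pi(K)\cap\pi(K)\pi(a)^n)$, and in the Hausdorff case derive a contradiction from $a^n\in K^{-1}K$ for infinitely many $n$ together with Lemma \ref{discrete}. The only cosmetic difference is that you phrase the contradiction as ``a compact subset of the discrete closed subgroup $G(a)$ is finite,'' whereas the paper extracts a convergent subsequence in the compact set $K^{-1}K$, which is impossible in $G(a)\cong\mathbb{Z}$; these are the same argument.
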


\begin{proof}
We first consider the case that $G$ is Hausdorff.

If we assume there exists a compact set $K$ such that $K\cap Ka^n\neq \varnothing$ for infinitely many $n$'s, then for those $n$'s, $a^n\in K^{-1}K$, which is impossible since there must admit a convergent subsequence in the compact set $K^{-1}K$. But this contradicts with the previous lemma, so the case of Hausdorff has been verified.

Now consider the general case. Let $G$ be a first countable locally compact group and $\pi :G\to \widetilde{G}$ be its Hausdorffication. For any compact set $K$ in $G$, $\pi(K)$ is compact in $\widetilde{G}$, so there exsit $N$ such that
\[\pi(K)\cap \pi(K)\pi(a)^n=\varnothing \text{ for } n>N\]
then
\begin{align*}
K\cap Ka^n &\subseteq (\pi ^{-1}\circ \pi)(K)\cap (\pi ^{-1}\circ \pi)(K)a^n\\
&= \pi ^{-1}(\pi(K))\cap \pi ^{-1}(\pi(K)\pi(a)^n)\\
&= \pi ^{-1}(\pi(K)\cap \pi(K)\pi(a)^n)\\
&= \pi ^{-1}(\varnothing )\\
&=\varnothing \text{ for } n>N.
\end{align*}
\end{proof}

\begin{theorem}
Let $G$ be a first countable locally compact group, then the following are equivalent:
\begin{enumerate}
\item
$a\in G$ is an aperiodic element.
\item
For any compact subset $K$ of $G$, there exists $N\in \mathbb{N}$ such that $K\cap Ka^n = \varnothing$ for $n>N$.
\end{enumerate}
\end{theorem}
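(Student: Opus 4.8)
The plan is to observe that this theorem is simply the amalgamation of the two implications already established as separate propositions in this section, so that no new argument is needed beyond citing them in the right order.

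First I would prove the implication (2) $\Rightarrow$ (1). This is exactly the content of Proposition \ref{runawayaper}, which in fact requires no countability hypothesis at all: if $a$ were periodic, then $K := G(a)$ is compact and $K \cap Ka^n = G(a) \neq \varnothing$ for every $n$, contradicting (2). So this direction is immediate.

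Next I would prove the substantive implication (1) $\Rightarrow$ (2), which is precisely Proposition \ref{aperrunaway}. Here I would recall the structure of its proof rather than redo it: reduce to the Hausdorff case via the Hausdorffication $\pi : G \to \widetilde{G}$ (using that $\pi$ is proper, so compact sets and the aperiodicity of $a$ both transfer, per items (3) and (5) above), and in the Hausdorff case use Lemma \ref{discrete} — that $G(a) \cong \mathbb{Z}$ — to conclude that $\{a^n\}$ admits no convergent subsequence, hence $a^n \notin K^{-1}K$ for all large $n$, which forces $K \cap Ka^n = \varnothing$. Combining the two directions finishes the equivalence.

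The step I expect to be the genuine obstacle is the one already absorbed into Lemmas \ref{secondcompact} and \ref{discrete}: showing that an aperiodic element of a first countable locally compact Hausdorff group generates a copy of $\mathbb{Z}$. This is where the real work lies — one needs the Birkhoff--Kakutani metrization theorem to get second countability of $G(a)$, the embedding of $G(a)$ into a compactly generated subgroup, a careful argument that $G(a)$ is itself compactly generated, and finally the structure theorem for compactly generated locally compact abelian groups to rule out any compact or $\mathbb{R}$-factor. Relative to that, the theorem as stated is a routine repackaging.
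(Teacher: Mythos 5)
Your proposal is correct and matches the paper exactly: the theorem is stated there with no separate proof precisely because it is the conjunction of Proposition \ref{runawayaper} for (2)$\Rightarrow$(1) and Proposition \ref{aperrunaway} for (1)$\Rightarrow$(2). Your recollection of the internal structure of those two propositions (the Hausdorffication reduction and the reliance on Lemma \ref{discrete}) is also accurate.
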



\begin{definition}
\label{defterminal}
Let $G$ be a locally compact group and $a\in G$. We say $G$ has a terminal pair $(A,B)$ w.r.t. $a$ if there exists a pair of disjoint closed subsets $(A,B)$ of $G$ such that for any given compact subset $K$ in $G$, we have 
    \[Ka^{n}\subseteq A,\]
    \[Ka^{-n}\subseteq B\]
  for $n$ large enough.
\end{definition}
More intuitively, $a$ shifts any compact subset positively (resp. nagetively) into $A$ (resp. $B$).
\begin{example}
One of the simplest cases is $G=\mathbb{Z}\text{ or }\mathbb{R}$ and $a=1$, the terminal pair w.r.t. $1$ can be given by $(A,B)=([100,\infty),(-\infty,-100])$.
\end{example}

\begin{example}
\label{terexam}
Let $G$ be a general linear group $GL(n,\mathbb{C})$, $a\in G$ with some eigenvalue $\lambda $ such that $|\lambda |\neq 1$, then $G$ admits a terminal pair w.r.t. $a$.
\end{example}
\begin{proof}
Without loss of generality, we can assume $a$ is itself a Jordan form, since we can act a conjugate automorphism on $G$ as 
\[a= \begin{bmatrix}
\lambda & * & \cdots & 0 \\
\vdots & \ddots & \ddots & \vdots \\
0 & \cdots & \lambda '& * \\
0 & \cdots & 0 & \lambda ''
\end{bmatrix}\]

Notation: Let any $x\in G$, we write $x=[x_1|x_2|...|x_n]$, where $x_i\in \mathbb{C}^n$ are the column of $x$. (Note that $x_i$ will never be zero vector since $x$ is invertible.)

Consider the map $f:G\to \mathbb{R}$, $f(x)=ln\|x_1\|$. By the calculation, we have $f(xa^n)=f(x)+n*ln|\lambda|$.

Set $(A,B)=(f^{-1}([1,\infty )),f^{-1}((-\infty,-1]))$, so for any compact subset $K$ in $G$,
\[\inf_{x\in K} f(xa^n)=(\inf_{x\in K} f(x))+n*ln|\lambda|,\]
\[\sup_{x\in K} f(xa^n)=(\sup_{x\in K} f(x))+n*ln|\lambda|\]
which implies $(A,B)$ is a terminal pair w.r.t. $a$.
\end{proof}

\begin{proposition}
\label{teraper}
Let $G$ be a locally compact group and admit a terminal pair w.r.t. $a$, then $a$ is an aperiodic element.
\end{proposition}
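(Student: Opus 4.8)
The plan is to prove the statement by contradiction, exactly in the spirit of Proposition~\ref{runawayaper}. The point is that if $a$ were periodic, then the subgroup $G(a)=\overline{\langle a\rangle}$ would be a \emph{compact} set on which the translations $x\mapsto xa^{\pm n}$ act trivially, so feeding $G(a)$ into the terminal-pair condition would force it into both $A$ and $B$, which is impossible since $A$ and $B$ are disjoint.

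Concretely, I would assume $a$ is not aperiodic, i.e.\ that $G(a)$ is compact, and then apply Definition~\ref{defterminal} to the compact set $K:=G(a)$. Since $G(a)$ is a subgroup and $a^{\pm n}\in\langle a\rangle\subseteq G(a)$, we have $G(a)a^{n}=G(a)=G(a)a^{-n}$ for every $n$ (a coset of a subgroup by one of its own elements is the subgroup itself). Hence the inclusions $Ka^{n}\subseteq A$ and $Ka^{-n}\subseteq B$ supplied by the definition for $n$ large read simply $G(a)\subseteq A$ and $G(a)\subseteq B$. Taking a single $n$ larger than both thresholds gives $G(a)\subseteq A\cap B$; but $e\in G(a)$ while $A\cap B=\varnothing$ by hypothesis, a contradiction. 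Therefore $a$ must be aperiodic.

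I do not expect a genuine obstacle here: the whole argument is the one-line observation that a compact subgroup is ``absorbed'' by the terminal pair. The only things worth keeping in mind are that Definition~\ref{defterminal} quantifies over \emph{all} compact subsets of $G$, so $K=G(a)$ is a legitimate test set, and that one common bound $N$ (or the maximum of the two thresholds) handles the $A$- and $B$-inclusions simultaneously. In fact the proof uses nothing about $A$ and $B$ beyond their disjointness, and it invokes neither local compactness nor a Hausdorff hypothesis on $G$; one could alternatively try to deduce the runaway condition of Proposition~\ref{runawayaper} from the terminal pair and quote that proposition, but the direct absorption argument above is the shortest route.
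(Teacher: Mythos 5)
Your proof is correct and is essentially the paper's own argument: take $K=G(a)$, note $G(a)a^{\pm n}=G(a)$, and conclude that $G(a)$ would have to lie in both $A$ and $B$, contradicting disjointness. You have simply spelled out the details more explicitly than the paper does.
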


\begin{proof}
Suppose $a$ is a periodic element but $G$ also admits a terminal pair w.r.t. $a$, then $G(a)a^n=G(a)$ for all $n\in\mathbb{Z}$, which implies that $A$ and $B$ are not disjoint.
\end{proof}

\begin{proposition}
\label{homoterminal}
Let $G,G'$ be locally compact groups, $G'$ admit a terminal pair w.r.t. $\phi (a)$, where $\phi :G \to G'$ is a continuous homomorphism, then $G$ also admits a terminal pair w.r.t. $a$. In other words, continuous homomorphisms pullback the terminal pair.
\end{proposition}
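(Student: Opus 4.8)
The plan is to pull the terminal pair back along $\phi$ in the most direct way. Write $(A',B')$ for the given terminal pair of $G'$ with respect to $\phi(a)$, so that $A',B'$ are disjoint closed subsets of $G'$ and, for every compact $K'\subseteq G'$, one has $K'\phi(a)^n\subseteq A'$ and $K'\phi(a)^{-n}\subseteq B'$ for all sufficiently large $n$. I would simply set $A:=\phi^{-1}(A')$ and $B:=\phi^{-1}(B')$.

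First I would check that $(A,B)$ is a pair of disjoint closed subsets of $G$. Closedness is immediate, since $\phi$ is continuous and $A',B'$ are closed; disjointness follows from $A'\cap B'=\varnothing$ together with the set-theoretic identity $\phi^{-1}(A')\cap\phi^{-1}(B')=\phi^{-1}(A'\cap B')$.

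Next, let $K\subseteq G$ be an arbitrary compact subset. Then $\phi(K)$ is compact in $G'$, being the continuous image of a compact set, so there is $N\in\mathbb{N}$ with $\phi(K)\phi(a)^n\subseteq A'$ and $\phi(K)\phi(a)^{-n}\subseteq B'$ for all $n>N$. Since $\phi$ is a homomorphism, $\phi(Ka^n)=\phi(K)\phi(a)^n\subseteq A'$, and hence $Ka^n\subseteq\phi^{-1}\bigl(\phi(Ka^n)\bigr)\subseteq\phi^{-1}(A')=A$ for $n>N$; the identical computation with $a^{-n}$ gives $Ka^{-n}\subseteq B$. This is exactly the defining property of a terminal pair, so $(A,B)$ is a terminal pair of $G$ with respect to $a$.

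I do not expect any real obstacle: the argument is purely formal. The only points that require a moment's care are that the set-theoretic inclusions are applied in the correct direction (using $S\subseteq\phi^{-1}(\phi(S))$ rather than equality, and $\phi^{-1}$ commuting with intersection) and that $\phi(Ka^n)=\phi(K)\phi(a)^n$ because $\phi$ respects the group operation; notably, no local compactness of $G$ itself is invoked beyond the ability to form compact test sets $K$, and indeed the hypothesis that $G$ is locally compact is used only to match the framework of Definition \ref{defterminal}.
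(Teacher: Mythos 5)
Your proposal is correct and is essentially identical to the paper's own proof: both take $(A,B)=(\phi^{-1}(A'),\phi^{-1}(B'))$, verify closedness and disjointness via continuity and preimages, and use the chain $Ka^{\pm n}\subseteq\phi^{-1}(\phi(K)\phi(a)^{\pm n})\subseteq\phi^{-1}(A')$ or $\phi^{-1}(B')$ with $\phi(K)$ compact. No differences worth noting.
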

\begin{proof}
Let $(A_{G'},B_{G'})$ be a terminal pair w.r.t. $\phi (a)$. Equivalently, for any given compact subset $K'$ in $G'$, we have 
    \[K'\phi (a)^{n}\subseteq A_{G'},\]
    \[K'\phi (a)^{-n}\subseteq B_{G'}\]
  for $n$ large enough.

Set $(A_G,B_G)=(\phi ^{-1}(A_{G'}),\phi ^{-1}(B_{G'}))$. By continuity of $\phi$, $(A_G,B_G)$ is also a pair of disjoint closed sets in $G$. Now, for any given compact subset $K$ in $G$, $\phi (K)$ is also compact, so we have 
\vskip 1.1em
    \[Ka^{n}\subseteq \phi ^{-1}(\phi (Ka^{n}))=\phi ^{-1}(\phi (K)\phi (a)^{n})\subseteq \phi ^{-1}(A_{G'})=A_G,\]
    \[Ka^{-n}\subseteq \phi ^{-1}(\phi (Ka^{-n}))=\phi ^{-1}(\phi (K)\phi (a)^{-n})\subseteq \phi ^{-1}(B_{G'})=B_G\]
  for $n$ large enough.
\end{proof}

\begin{example}
Let $G=S^1\times\mathbb{R}$ and $a=(0,1)\in S^1\times\mathbb{R}$, where $S^1$ denotes the circle group. Then the natural quotient map $\phi :G \to \mathbb{R}$ sends $a$ to $1$, so terminal pair w.r.t. $a$ can be given by $(A,B)=(\phi^{-1}((-\infty,-100]),\phi^{-1}([100,\infty)))$.
\end{example}

\begin{proposition}
\label{terminalHausdorffication}
Let $G$ be locally compact group and $\pi :G\to \widetilde{G}$ be the Hausdorffication of $G$, then $(A,B)$ is a terminal pair w.r.t. $a$ iff $(\widetilde{A},\widetilde{B})=(\pi(A),\pi(B))$ is a terminal pair w.r.t. $\pi(a)$ with closed subsets $A,B$ of $G$.
\end{proposition}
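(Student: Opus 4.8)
The plan is to prove the two implications separately, relying throughout on the structure of $\pi$ established above: it is a continuous, open, closed and proper surjective homomorphism with $\ke\pi=\overline{\{e\}}$, and — by property (1) in the list of facts about the Hausdorffication — every open or closed subset of $G$ is $\pi$-saturated, i.e. a union of cosets of $\overline{\{e\}}$, so that $\pi^{-1}\pi(C)=C$ for every closed $C\subseteq G$.

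For the backward implication, assume $(\widetilde A,\widetilde B)=(\pi(A),\pi(B))$ is a terminal pair w.r.t. $\pi(a)$ in $\widetilde G$, with $A,B$ closed in $G$. I would simply invoke Proposition \ref{homoterminal} with the continuous homomorphism $\phi=\pi$: it gives that $\bigl(\pi^{-1}(\pi(A)),\,\pi^{-1}(\pi(B))\bigr)$ is a terminal pair w.r.t. $a$. Since $A$ and $B$ are closed, property (1) yields $\pi^{-1}(\pi(A))=A$ and $\pi^{-1}(\pi(B))=B$, so $(A,B)$ itself is a terminal pair w.r.t. $a$, as required.

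For the forward implication, assume $(A,B)$ is a terminal pair w.r.t. $a$, so $A$ and $B$ are disjoint closed subsets of $G$. Then $\pi(A)$ and $\pi(B)$ are closed in $\widetilde G$ because $\pi$ is a closed map. They are disjoint: using that $A,B$ are saturated, $\pi^{-1}\bigl(\pi(A)\cap\pi(B)\bigr)=\pi^{-1}\pi(A)\cap\pi^{-1}\pi(B)=A\cap B=\varnothing$, and surjectivity of $\pi$ forces $\pi(A)\cap\pi(B)=\varnothing$. For the shifting condition in Definition \ref{defterminal}, let $\widetilde K\subseteq\widetilde G$ be an arbitrary compact set and put $K:=\pi^{-1}(\widetilde K)$; then $K$ is compact because $\pi$ is proper, and $\pi(K)=\widetilde K$ by surjectivity. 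By hypothesis there is $N\in\mathbb N$ with $Ka^n\subseteq A$ and $Ka^{-n}\subseteq B$ for all $n>N$; applying the homomorphism $\pi$ and using $\pi(Ka^{\pm n})=\pi(K)\pi(a)^{\pm n}=\widetilde K\,\pi(a)^{\pm n}$ gives $\widetilde K\,\pi(a)^n\subseteq\pi(A)$ and $\widetilde K\,\pi(a)^{-n}\subseteq\pi(B)$ for $n>N$. Hence $(\pi(A),\pi(B))$ is a terminal pair w.r.t. $\pi(a)$.

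There is no real obstacle here; the only points that need care are that $\pi^{-1}\pi(C)=C$ for closed $C$ (which is exactly property (1)), and the observation that it suffices to test the terminal-pair condition in $\widetilde G$ on compact sets of the special shape $\pi^{-1}(\widetilde K)$, which is legitimate since every compact subset of $\widetilde G$ is of this form. It is also worth noting why the clause ``with closed subsets $A,B$ of $G$'' is included: in the backward direction $(A,B)$ can only be called a terminal pair once $A,B$ are known to be closed, and this is not forced merely by $\pi(A),\pi(B)$ being closed.
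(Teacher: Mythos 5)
Your proof is correct and follows essentially the same route as the paper: the forward direction via closedness/properness of $\pi$ and the fact that every compact subset of $\widetilde{G}$ is the image of the compact set $\pi^{-1}(\widetilde{K})$, and the backward direction via Proposition \ref{homoterminal} applied to $\phi=\pi$. You merely make explicit a step the paper leaves implicit, namely that $\pi^{-1}\pi(A)=A$ and $\pi^{-1}\pi(B)=B$ because closed sets are saturated (property (1) of the Hausdorffication).
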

\begin{proof}
Suppose $(A,B)$ is a terminal pair w.r.t. $a$. $\widetilde{A}$ and $\widetilde{B}$ are also disjoint closed follows by the closedness of $\pi$ and they are union of cosets of $\overline{\{e\}}$, immediately. $(\widetilde{A},\widetilde{B})$ is also a terminal pair w.r.t. $\pi(a)$ follows by there are one to one correspondence between the set of closed compact of $G$
and $\widetilde{G}$ (i.e. $\widetilde{K}=\pi(\pi^{-1}(\widetilde{K}))$). The other side follows from Proposition \ref{homoterminal}.
\end{proof}

Recall that any second countable locally compact Hausdorff groups are compatible with a proper ``right'' invariant metric $d$ \cite{plig}. In this metric space the Heine-Borel property holds. Therefore, $B_R(x):=\{y\in G|d(x,y)<R\}$ is a precompact open ball, and by right invariant $B_R(xa)=B_R(x)a$.

Note that \cite{plig} said that there's a proper ``left'' invariant metric say $d_L$, but it's easy to induce a proper ``right'' invariant metric $d$ by set $d(x,y):=d_L(x^{-1},y^{-1})$.

\begin{theorem}
\label{gotoinfty}
Let $G$ be a second countable locally compact Hausdorff group, then the following are equivalent:
\begin{enumerate}
\item
$a\in G$ is an aperiodic element.
\item
For any compact subsets $K$ and $K'$ in $G$, $d(Ka^{\ell},K')\to \infty$ as $\ell \to \infty$.
\end{enumerate}
\end{theorem}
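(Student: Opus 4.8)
The plan is to prove the two implications separately. The direction $(2)\Rightarrow(1)$ is essentially a repackaging of Proposition \ref{runawayaper}, while $(1)\Rightarrow(2)$ carries the real content and uses the properness of the right-invariant metric together with Proposition \ref{aperrunaway}.

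For $(2)\Rightarrow(1)$ I would simply apply (2) with $K'=K$. Since $d(Ka^{\ell},K)\to\infty$, we have $d(Ka^{\ell},K)>0$ for all large $\ell$, and $d(Ka^{\ell},K)>0$ forces $Ka^{\ell}\cap K=\varnothing$ (a common point would make the distance $0$). Thus for every compact $K$ there is an $N$ with $K\cap Ka^{n}=\varnothing$ for $n>N$, which is precisely the hypothesis of Proposition \ref{runawayaper}; hence $a$ is aperiodic.

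For $(1)\Rightarrow(2)$, fix compact sets $K,K'$ and a real number $M>0$; it suffices to find $N=N(M)$ with $d(Ka^{n},K')\ge M$ for all $n>N$. Put $K'':=\{x\in G: d(x,K')\le M\}$. Since $x\mapsto d(x,K')$ is $1$-Lipschitz, $K''$ is closed; since $K'$ is compact it is contained in some closed ball $\overline{B_R(e)}$, so by right invariance and the triangle inequality $K''\subseteq \overline{B_{R+M}(e)}$, which is compact by the Heine–Borel property of the proper metric. Hence $K''$ is compact, and so is $K\cup K''$. Now apply Proposition \ref{aperrunaway} to the compact set $K\cup K''$ (legitimate, as a second countable group is first countable and $a$ is aperiodic): there is $N$ with $(K\cup K'')\cap(K\cup K'')a^{n}=\varnothing$ for $n>N$. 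For such $n$, $Ka^{n}\subseteq (K\cup K'')a^{n}$ is disjoint from $K''\subseteq K\cup K''$, so $Ka^{n}\cap K''=\varnothing$; consequently every $z\in Ka^{n}$ satisfies $d(z,K')>M$, i.e.\ $d(Ka^{n},K')\ge M$. As $M$ was arbitrary, $d(Ka^{\ell},K')\to\infty$ as $\ell\to\infty$.

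The only step that is not purely formal is the compactness of the enlarged set $K''$, and this is exactly where the Heine–Borel property (properness of the metric) is essential — without it the $M$-neighbourhood of a compact set need not be precompact. Everything else is bookkeeping with the ``for $n$ large enough'' quantifiers and the elementary observation that disjointness of sets is detected by strict positivity of their distance. No separate non-Hausdorff case is needed, since the hypothesis already assumes $G$ is Hausdorff (which is also what makes the metric available).
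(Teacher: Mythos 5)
Your proof is correct, but both implications are argued differently from the paper. For $(2)\Rightarrow(1)$ the paper simply takes $K=K'=\{e\}$ and notes that a periodic $a$ would have $\{a^n\}$ contained in the compact (hence bounded) group $G(a)$; your route through $K'=K$ and Proposition \ref{runawayaper} is equally valid, just slightly less direct. The real divergence is in $(1)\Rightarrow(2)$: the paper argues by contradiction, assuming $d(Ka^{\ell},K')$ stays bounded by some $C$ along a subsequence, picking witnesses $k_{\ell}a^{\ell}k_{\ell}'^{-1}$ in the compact closed ball of radius $C$, extracting convergent subsequences, and concluding that $a^{\ell'}$ converges --- contradicting Lemma \ref{discrete} ($G(a)\cong\mathbb{Z}$ discrete). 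You instead give a direct quantitative argument: thicken $K'$ to $K''=\{x: d(x,K')\le M\}$, use the Heine--Borel property of the proper right-invariant metric to see $K''$ is compact, and feed $K\cup K''$ into Proposition \ref{aperrunaway}. Both proofs lean on the same two ingredients (properness of the metric and, directly or via Proposition \ref{aperrunaway}, the discreteness of $G(a)$), but your version avoids any subsequence extraction, produces an explicit $N(M)$ for each threshold $M$, and makes transparent that the theorem is really just the ``$K\cap Ka^n=\varnothing$ eventually'' property applied to a metric thickening. One small point worth stating explicitly if you write this up: the negation of $d(Ka^\ell,K')\to\infty$ is boundedness along a subsequence, which is why your ``fix $M$, find $N(M)$'' formulation (proving the full limit directly) is the cleaner way to organize the quantifiers.
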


\begin{proof}
(2$\Rightarrow $1). Choose $K=K'=\{e\}$, then $d(e,a^{\ell})\to \infty$ as $\ell \to \infty$. But if $a$ is periodic, then $G(a)$ is compact, hence bounded. This implies that $a^n$ are uniformly bounded, and there is a contradiction.

(1$\Rightarrow $2). Suppose $d(Ka^{\ell},K')$ is uniformly bounded by $C>0$, then for any $\ell\in\mathbb{N}$, there exists $k_{\ell}\in K$, $k'_{\ell}\in K'$ such that $d(k_{\ell}a^{\ell},k'_{\ell})=d(k_{\ell}a^{\ell}k'^{-1}_{\ell},e)<C$. Since the closed ball center at $e$ is compact, so there's a subsequence $\left\{\ell'\right\}$ , $k^{-1}\in K^{-1}$, $k'\in K'$ and an element $b$ in the ball such that 

\[k^{-1}_{\ell'}\to k^{-1},\]
\[k'_{\ell'}\to k',\]
\[k_{\ell'}a^{\ell'}k'^{-1}_{\ell'}\to b  \text{ as }  \ell' \to \infty.\]
Hence $a^{\ell'}\to k^{-1}bk' \text{ as }  \ell' \to \infty$, which contradicts with the Lemma \ref{discrete}, since $a^{\ell'}$ never converge in $G(a)\cong \mathbb{Z}$.
\end{proof}

\begin{theorem}
\label{equaperterminal}
Let $G$ be a second countable locally compact group, then the following are equivalent:
\begin{enumerate}
\item
$a\in G$ is an aperiodic element.
\item
For any compact subset $K$ of $G$, there exists $N\in \mathbb{N}$ such that $K\cap Ka^n = \varnothing$ for $n>N$.
\item
$G$ has a terminal pair w.r.t. $a$.
\end{enumerate}
\end{theorem}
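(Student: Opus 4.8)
The plan is the following. Two of the three implications closing the equivalence are already available: $(2)\Rightarrow(1)$ is Proposition~\ref{runawayaper}, $(1)\Rightarrow(2)$ is Proposition~\ref{aperrunaway} (a second countable group is first countable), and $(3)\Rightarrow(1)$ is Proposition~\ref{teraper}. So the whole content of the theorem is the implication $(1)\Rightarrow(3)$: assuming $a$ aperiodic, construct a terminal pair with respect to $a$.

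First I would reduce to the Hausdorff case. Let $\pi\colon G\to\widetilde G$ be the Hausdorffication of $G$. By properties (2) and (3) of the Hausdorffication established above, $\widetilde G$ is again second countable and locally compact, it is Hausdorff, and by property (5) the element $\pi(a)$ is aperiodic in $\widetilde G$. Once a terminal pair $(\widetilde A,\widetilde B)$ for $\pi(a)$ has been produced in $\widetilde G$, Proposition~\ref{homoterminal} applied to the continuous homomorphism $\pi$ hands us the terminal pair $(\pi^{-1}(\widetilde A),\pi^{-1}(\widetilde B))$ for $a$ in $G$. So from now on I may assume $G$ is second countable locally compact Hausdorff and fix a proper right invariant metric $d$ on $G$ with the Heine--Borel property, noting that $B_R(xa)=B_R(x)a$.

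Now the construction in the Hausdorff case. Because $d$ is proper, the closed balls $K_m:=\overline{B_m(e)}$ are compact, increase with $m$, and exhaust $G$, so every compact set lies inside some $K_m$. Theorem~\ref{gotoinfty}, applied with $K=K'=K_m$, converts the aperiodicity of $a$ into $d(K_ma^{\ell},K_m)\to\infty$ as $\ell\to\infty$; hence I may choose integers $N_1\le N_2\le\cdots$ with $d(K_ma^{\ell},K_m)\ge 1$ whenever $\ell\ge N_m$. Then I put
\[
A:=\overline{\ \bigcup_{m\ge 1}\ \bigcup_{n\ge N_m} K_ma^{n}\ },\qquad
B:=\overline{\ \bigcup_{m\ge 1}\ \bigcup_{n\ge N_m} K_ma^{-n}\ }.
\]
These are closed, and the containment clause of Definition~\ref{defterminal} is automatic: if $K\subseteq K_m$, then $Ka^{n}\subseteq A$ and $Ka^{-n}\subseteq B$ for every $n\ge N_m$. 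What needs an argument is $A\cap B=\varnothing$, for which it suffices to show the two unions are a positive distance apart, since then so are their closures. Take $x$ in some $K_ma^{n}$ with $n\ge N_m$ and $y$ in some $K_{m'}a^{-n'}$ with $n'\ge N_{m'}$. By right invariance $d(x,y)\ge d(K_ma^{n},K_{m'}a^{-n'})=d(K_ma^{n+n'},K_{m'})$. Writing $M=\max(m,m')$ and using $K_m,K_{m'}\subseteq K_M$ together with the monotonicity of the set-distance under enlarging either set, $d(K_ma^{n+n'},K_{m'})\ge d(K_Ma^{n+n'},K_M)$; moreover $n+n'\ge N_M$ because the $N_j$ are non-decreasing and $n\ge N_m$, $n'\ge N_{m'}$. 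Hence $d(x,y)\ge 1$, so $A$ and $B$ are disjoint, and $(A,B)$ is a terminal pair with respect to $a$, which gives $(1)\Rightarrow(3)$.

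The step I expect to be the main obstacle is exactly this disjointness: $A$ and $B$ must each absorb an eventual tail of $\{Ka^{n}\}_n$ (resp. $\{Ka^{-n}\}_n$) for every compact $K$, and yet remain separated. The mechanism reconciling the two demands is to let a larger exhausting compact $K_m$ contribute only the higher powers $a^{n}$, $n\ge N_m$, and then to use right invariance to turn every ``crossed'' distance $d(K_ma^{n},K_{m'}a^{-n'})$ into a ``diagonal'' distance $d(K_Ma^{\ell},K_M)$ with $\ell$ large, which Theorem~\ref{gotoinfty} keeps $\ge 1$.
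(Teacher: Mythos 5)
Your proposal is correct, and the core of it --- the construction of the terminal pair in the Hausdorff case --- takes a genuinely different route from the paper's. The paper reduces to the Hausdorff case via the Hausdorffication exactly as you do, and it also relies on the proper right-invariant metric and on Theorem~\ref{gotoinfty}; but its construction is pointwise along orbits: it introduces the set $J$ of points minimizing $d(\cdot,e)$ along their $a$-orbit (meant to simulate a transversal ``$y$-axis''), attaches to each $x\in J$ a threshold $N_x$, defines $A$ and $B$ as closures of unions of balls $B_{\frac14}(xa^{\pm n})$ with $n>N_x$, and then needs two triangle-inequality claims to separate $A$ from $B$ plus a finite-subcover argument to verify the containment clause for an arbitrary compact $K$. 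You instead use the compact exhaustion $K_m=\overline{B_m(e)}$ with monotone thresholds $N_m$ supplied directly by Theorem~\ref{gotoinfty}, so the containment clause of Definition~\ref{defterminal} holds by construction, and the disjointness collapses, via right invariance and monotonicity of the set distance, to the single inequality $d(K_Ma^{n+n'},K_M)\ge 1$ for $n+n'\ge N_M$ (which holds since $n+n'\ge N_m+N_{m'}\ge\max(N_m,N_{m'})=N_M$, the $N_j$ being non-negative and non-decreasing). Your version buys brevity and eliminates the covering argument; the paper's version buys an explicit geometric picture of an orbit transversal. Both arguments are complete, and your treatment of the remaining implications via Propositions~\ref{runawayaper}, \ref{aperrunaway} and \ref{teraper} matches the paper's.
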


\begin{proof}
By discussion above, we only need to prove the case (1$\Rightarrow $3).

We first consider the case that $G$ is Hausdorff.

Set 
\[J:=\left\{xa^n\in G|x\in G\text{ and } d(xa^n,e)\le d(xa^{n'},e) \text{ for all }{n'}\in \mathbb{Z}\right\}\]
\[N_x:=2\text{ } min\left\{N\in \mathbb{N}|2d(x,e)+2< d(xa^n,e)\text{ for all }n\in\mathbb{Z}\text{ with }|n|>N\right\}\]

The reason for the definition of $J$ is we want to simulate the special case $G=\mathbb{R}^2, a=(1,0)$. In this special case, the terminal pair w.r.t. $a$ can be set as $(\{(x,y)|x\geq 100\},\{(x,y)|x\le -100\})$. To deduce this, we need some ``sense'' like y-axis which is orthogonal to $a$, hence we define $J$ to be the simulation of y-axis.

Note that they are well-defined, since Theorem \ref{gotoinfty} implies that both $d(xa^n,e)$ and $d(xa^{-n},e)$ $\to \infty$ as $n\to \infty$. And for each $x\in G$, there exists $xa^n\in J$ (i.e. $G=\left\{xa^n|x\in J,n\in \mathbb{Z}\right\}$), and this follows from the same reason.

Note that $\mathfrak{B}:=\left\{B_{\frac{1}{4}}(xa^n)\right\}_{x\in J,n\in \mathbb{Z}}$ is a covering of $G$.

\textbf{Claim}: For any $x,y\in J$, $n>N_x$ and $m>N_y$, we have $d(xa^n,ya^{-m})>1$.

Suppose $1\geq d(xa^n,ya^{-m})=d(x,ya^{-n-m})=d(xa^{n+m},y)$.
Then
\begin{align*}
2+2d(x,e) &< d(xa^{n+m},e)\\
&\le d(xa^{n+m},y)+d(y,e)\\
&\le 1+d(y,e)\\
&\le 1+d(ya^{-n-m},e)\\
&\le 1+d(ya^{-n-m},x)+d(x,e)\\
&\le 2+d(x,e).\\
\end{align*}
\vskip -2.1em
There is a contradiction.

\textbf{Claim}: For any $x,y\in J$, $n>N_x$ and $m>N_y$, we have 
\[d(B_{\frac{1}{4}}(xa^n),B_{\frac{1}{4}}(ya^{-m}))>\frac{1}{4}.\]

Suppose $d(B_{\frac{1}{4}}(xa^n),B_{\frac{1}{4}}(ya^{-m}))\le\frac{1}{4}<\frac{1}{3}$.
Then there exists $x'\in B_{\frac{1}{4}}(xa^n)$ and $y'\in B_{\frac{1}{4}}(ya^{-m})$ such that $d(x',y')<\frac{1}{3}$, then we have $d(xa^n,ya^{-m})<\frac{1}{4}+\frac{1}{4}+\frac{1}{3}<1$, contradiction.

Now, we are ready to set our terminal pair. Set $(A,B):=(\overline{A'},\overline{B'})$, where 
\[A':=\bigcup_{x\in J,n>N_x}B_{\frac{1}{4}}(xa^n),\]
\[B':=\bigcup_{x\in J,n>N_x}B_{\frac{1}{4}}(xa^{-n}).\]
By claim, $d(A',B')\geq\frac{1}{4}>0$, so $A$ and $B$ are disjoint closed set.

Now we will show that for each compact set $K$ shift into $A$ and $B$. By compactness, there exist finitely many balls in $\mathfrak{B}$, $\left\{B_{\frac{1}{4}}(x_ia^{n_i})\right\}^{\ell}_{i=1}$ which covers $K$. Set\\
$N:=2\text{ }max_{1\le i\le\ell}\left\{|n_i-N_{x_i}|,|n_i+N_{x_i}|\right\}$, so $n_i+n>N_{x_i}$ and $n_i-n<-N_{x_i}$ for $n>N$. Then

\[Ka^n\subseteq \cup^{\ell}_{i=1} B_{\frac{1}{4}}(x_ia^{n_i})a^n= \cup^{\ell}_{i=1} B_{\frac{1}{4}}(x_ia^{n_i+n})\subseteq A,\]
\[Ka^{-n}\subseteq \cup^{\ell}_{i=1} B_{\frac{1}{4}}(x_ia^{n_i})a^{-n}= \cup^{\ell}_{i=1} B_{\frac{1}{4}}(x_ia^{n_i-n})\subseteq B\]
for $n>N$, so the case of Hausdorff has been verified.

Now consider the general case. Let $G$ be a second countable locally compact group and $\pi :G\to \widetilde{G}$ be its Hausdorffication, so $\pi (a)$ is also aperiodic, hence $\widetilde{G}$ has a terminal pair w.r.t. $\pi(a)$. By Proposition \ref{homoterminal}, we are done.

\end{proof}

\section{Existence of hypercyclic weighted translations} 
\label{sec:existence_of_hypercyclic_weighted_translations}
Now we would like to discuss how the existence of terminal pair affects the existence of hypercyclic weighted translation operators.
\begin{lemma}
\label{existencelemma}
Let $G$ be a second countable locally compact group and admit a terminal pair w.r.t. $a$, then there exists a weighted translation operator $T_{a,w}$ which satisfies the frequent hypercyclicity criterion on $L^p(G)$ for all $p\in [1,\infty)$, simultaneously.
\end{lemma}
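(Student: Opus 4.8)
The plan is to produce a single weight $w$, independent of $p$, and to verify the three hypotheses of the Frequent Hypercyclicity Criterion for $T_{a,w}$ with dense subspace $X_0:=C_c(G)$, the continuous compactly supported functions, and with $S\colon X_0\to X_0$ the canonical right inverse of $T_{a,w}$, namely
\[ Sf(x):=\frac{f(xa)}{w(xa)} . \]
Since $G$ is second countable, each $L^p(G)$ with $1\le p<\infty$ is a separable Banach space, so the criterion is applicable, and $C_c(G)$ is dense in it. The identities $T_{a,w}Sf=f$ and $S\big(C_c(G)\big)\subseteq C_c(G)$ (if $\operatorname{supp}f\subseteq K$ then $\operatorname{supp}Sf\subseteq Ka^{-1}$, and $1/w$ is continuous) are immediate, so everything comes down to choosing $w$ so that, for every $f\in C_c(G)$, the series $\sum_{n\ge0}T_{a,w}^nf$ and $\sum_{n\ge0}S^nf$ converge absolutely in $L^p(G)$ --- absolute convergence being unconditional in a Banach space.

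I would first reduce to the Hausdorff case: by Proposition \ref{terminalHausdorffication} the Hausdorffification $\pi\colon G\to\widetilde G$ has a terminal pair w.r.t. $\pi(a)$, a weight $\widetilde w$ on $\widetilde G$ pulls back to $w:=\widetilde w\circ\pi$ on $G$, and the rest of the argument transports back to $G$ exactly as in the non-Hausdorff parts of Propositions \ref{aperrunaway} and \ref{terminalHausdorffication}. So assume $G$ is second countable locally compact Hausdorff --- hence metrizable and normal --- and fix a terminal pair $(A,B)$ w.r.t. $a$. Then I would fix $c\in(0,1)$, choose by Urysohn's lemma a continuous $g\colon G\to[0,1]$ with $g\equiv0$ on $A$ and $g\equiv1$ on $B$, and set $w:=c^{\,1-2g}$. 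This $w$ is continuous with values in $[c,c^{-1}]$, so it is bounded and bounded away from $0$ (hence $T_{a,w}$ is bounded on every $L^p(G)$), and $w\equiv c$ on $A$ while $w\equiv c^{-1}$ on $B$.

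For the key estimate, note that iterating gives $T_{a,w}^nf(x)=\big(\prod_{j=0}^{n-1}w(xa^{-j})\big)f(xa^{-n})$ and $S^nf(x)=f(xa^n)/\prod_{j=1}^{n}w(xa^j)$. Using the right-invariance of Haar measure (the substitution $x\mapsto xa^{\mp n}$ makes the translate of $f$ disappear) together with $\operatorname{supp}f\subseteq K$ compact, I would obtain
\[ \|T_{a,w}^nf\|_p\le\Big(\sup_{x\in K}\prod_{k=1}^{n}w(xa^{k})\Big)\|f\|_p,\qquad \|S^nf\|_p\le\Big(\sup_{x\in K}\prod_{k=0}^{n-1}w(xa^{-k})^{-1}\Big)\|f\|_p . \]
By the defining property of a terminal pair there is an $N=N_K$, depending only on $K$, with $Ka^k\subseteq A$ and $Ka^{-k}\subseteq B$ for all $k>N$; since $w\le c$ on $A$, $w^{-1}\le c$ on $B$, and $w,w^{-1}\le c^{-1}$ everywhere, each supremum above is at most $c^{-N}c^{\,n-N}$ once $n>N$. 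Hence $\sum_n\|T_{a,w}^nf\|_p$ and $\sum_n\|S^nf\|_p$ are both finite (finitely many finite terms plus a geometric tail bounded by $c^{-N}\sum_{m\ge1}c^m\,\|f\|_p$), and these bounds do not involve $p$ --- so all three hypotheses of the Frequent Hypercyclicity Criterion hold for $T_{a,w}$ on $L^p(G)$ simultaneously for all $p\in[1,\infty)$, which is what Lemma \ref{existencelemma} asserts.

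I expect the only substantive points to be the correct change-of-variables computation of $\|T_{a,w}^nf\|_p$ and $\|S^nf\|_p$ --- where right-invariance of the Haar measure is exactly what cancels the iterated shift --- and the realization that the \emph{uniformity} of $N=N_K$ in the definition of a terminal pair is precisely what upgrades the pointwise decay of the weight products to a geometrically summable bound over a fixed compact support. The Hausdorffification reduction and the Urysohn construction of $w$ are routine.
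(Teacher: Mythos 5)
Your proposal is correct, and its overall skeleton --- reduce to the Hausdorff case via the Hausdorffication, build $w$ by Urysohn's lemma so that it is a constant $<1$ on $A$ and its reciprocal on $B$, and verify the Frequent Hypercyclicity Criterion on compactly supported functions with the canonical right inverse $S$ --- is exactly the paper's. The one genuine difference is in how the unconditional convergence of $\sum_n T^n f$ and $\sum_n S^n f$ is verified. The paper first proves exponential decay of $\left\|T^n\varphi\right\|_{\infty}$ and $\left\|S^n\varphi\right\|_{\infty}$, and then passes to $L^p$ by invoking aperiodicity to get $K\cap Ka^{-n}=\varnothing$ for large $n$, splitting $\{Ka^{-n}\}_n$ into $N_0$ pairwise disjoint families, and applying the monotone convergence theorem together with $|Ka^n|=|K|$. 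You instead estimate $\left\|T^n f\right\|_p$ directly: the change of variables $x\mapsto xa^{\mp n}$ (legitimate by right-invariance of the Haar measure the paper fixes) turns the iterated weight into $\sup_{x\in K}\prod_{k=1}^n w(xa^k)$, and the uniformity of $N_K$ in the definition of a terminal pair makes this a geometric tail, giving $\sum_n\left\|T^nf\right\|_p<\infty$ and hence unconditional convergence outright. Your route is shorter, avoids the disjoint-support decomposition entirely, and does not need the separate consequence $K\cap Ka^{-n}=\varnothing$ of aperiodicity; the paper's route proves the marginally stronger fact that $\sum_n\left|T^n\varphi\right|$ itself converges in $L^p$, which is more than the criterion requires. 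The only blemishes are cosmetic: your stated bound $c^{-N}c^{\,n-N}$ for the $S$-product is off by a bounded factor (the product $\prod_{k=0}^{n-1}w(xa^{-k})^{-1}$ has $N+1$ initial factors, not $N$), and the sentence that the non-Hausdorff case ``transports back'' deserves the one extra line that $w=\widetilde w\circ\pi$ is still constant on the original $A$ and $B$ because $A\subseteq\pi^{-1}(\pi(A))$; neither affects correctness.
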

\begin{proof}
We need to construct the weight $w$ by Urysohn's lemma and verify the frequent hypercyclicity criterion \cite[Theorem 9.9 and Proposition 9.11.]{Linear_chaos} directly.

Let $\pi :G\to \widetilde{G}$ be the Hausdorffication of $G$.

Recall that any locally compact Hausdorff group is normal \cite[Theorem 8.13]{E_Hewitt}, so $\widetilde{G}$ is normal. We use the notation in Proposition \ref{terminalHausdorffication}. Set $\widetilde{w}|_{\widetilde{A}}=2^{-1}$ and $\widetilde{w}|_{\widetilde{B}}=2$, by Urysohn's lemma $\widetilde{w}$ is a well-defined continuous function on $\widetilde{G}$ with the image lying in $[2^{-1},2]\subset(0,\infty)$, so we can induce $w:=\widetilde{w}\circ \pi$ with $w|_A=2^{-1}$, $w|_B=2$ and the image liying in $[2^{-1},2]\subset(0,\infty)$.

To verify the conclusion by frequent hypercyclicity criterion. We set \[X_0:=\{\text{bounded compact support functions on } G\}\] which is a dense subspace in $L^p(G)$ for all $p\in[1,\infty)$, and also set 
\[T=T_{a,w} \text{ and } S=T^{-1}_{a,w}=T_{a^{-1},w'},\]
where $w':=\left(T_{a^{-1}}w\right)^{-1}$.

Now given $\varphi\in X_0$. Let $K:=supp (\varphi)$ (note that $supp(\varphi(\cdot a^i))=Ka^{-i}$), let us first claim this:
\[\left\|T^n \varphi\right\|_{\infty} \text{ and } \left\|S^n \varphi\right\|_{\infty} \text{ decay to zero by exponential type.}\]
That is,
\begin{align*}
\left\|T^n \varphi\right\|_{\infty} \le C \gamma ^{-n}&,\\
\left\|S^n \varphi\right\|_{\infty} \le C \gamma ^{-n}&\text{  for some $C>0$, $\gamma>1$ and $n$ large enough.}
\end{align*}
We only need to prove the first case, since the second case is symmetric with the first case by replacing $A$ to $B$, $B$ to $A$, $a$ to $a^{-1}$ , $w$ to $w'$ and $T$ to $S$. Since $K$ is compact, there is a large number $N$ such that 
\[Ka^{n}\subseteq A,\]
\[Ka^{-n}\subseteq B\]
for $n\ge N$, then 

\begin{align*}
\left\|T^n \varphi\right\|_{\infty}&=\left\|\prod \limits^{n-1}_{i=0}w(xa^{-i}) \varphi(xa^{-n})\right\|_{\infty}\\
&=\left\|\prod \limits^{n-1}_{i=0}w(xa^{n-i}) \varphi(x)\right\|_{\infty}\\
&=\left\|\prod \limits^{n}_{j=1}w(xa^{j}) \varphi(x)\right\|_{\infty}\quad(j=n-i)\\
&\le \left\|\prod \limits^{n}_{j=1}w(xa^{j})|_K\right\|_{\infty}\|\varphi\|_{\infty}\\
&\le \left\|\prod \limits^{N}_{j=1}w(xa^{j})\right\|_{\infty}\left\|\varphi\right\|_{\infty}\prod \limits^{n}_{j=N+1}\left\|w(xa^{j})|_K\right\|_{\infty}\\
&= \left\|\prod \limits^{N}_{j=1}w(xa^{j})\right\|_{\infty}\left\|\varphi\right\|_{\infty}\prod \limits^{n}_{j=N+1}\left\|w(x)|_{Ka^{j}}\right\|_{\infty}\\
&\le \left\|\prod \limits^{N}_{j=1}w(xa^{j})\right\|_{\infty}\left\|\varphi\right\|_{\infty}\prod \limits^{n}_{j=N+1} 2^{-1}\quad (\text{since }Ka^j\subseteq A).
\end{align*}
So it has been verified that $\|T^n \varphi\|_{\infty}$ and $\|S^n \varphi\|_{\infty} $ decay to zero by exponential type.

Finally, we will show that 
\[\sum \limits^{\infty}_{n=0}T^n \varphi \text{ converges unconditionaly}\]
and
\[\sum \limits^{\infty}_{n=0}S^n \varphi \text{ converges unconditionaly}.\]
The same as above, we only need to verify the first case, and we will say they are actually converge absolutely. By aperiodicity of $a$, there is a large number $N_0$ such that 
\[K\cap Ka^{-n}=\varnothing \text{ for }n\ge N_0\]
then we can easily check $\mathcal{C}_\ell:=\{Ka^{-(N_0j+\ell)}\}_{j\in \mathbb{Z}}$ is a mutually disjoint collection for each $\ell=0,1,...,N_0-1$. So

\begin{align*}
\left\|\sum \limits^{\infty}_{n=0}T^n \varphi\right\|_p &\le \left\|\sum \limits^{N_0-1}_{\ell=0}\sum \limits^{\infty}_{j=0}\left|T^{N_0j+\ell} \varphi\right|\right\|_p\\
&\le \sum \limits^{N_0-1}_{\ell=0}\left\|\sum \limits^{\infty}_{j=0}\left|T^{N_0j+\ell} \varphi\right|\right\|_p.\\
\end{align*}
Note that the next step follows by $supp(T^n\varphi)\subseteq Ka^n$ and $\mathcal{C}_\ell$ are mutually disjoint collections and the monotone convergence theorem. Now, for any given $\ell=0,1,...,N_0-1$.
\begin{align*}
\left\|\sum \limits^{\infty}_{j=0}\left|T^{N_0j+\ell} \varphi\right|\right\|^p_p
&=\int {\left|\sum \limits^{\infty}_{j=0}T^{N_0j+\ell}{\varphi}\right|^p}\\
&=\int {\sum \limits^{\infty}_{j=0}\left|T^{N_0j+\ell}{\varphi}\right|^p}\text{ ($C_{\ell}$ are mutually disjoint)}\\
&=\sum \limits^{\infty}_{j=0}\int {\left|T^{N_0j+\ell}{\varphi}\right|^p}\\
&\le \sum \limits^{\infty}_{n=0}\left\|T^n \varphi\right\|^p_p\\
&= \sum \limits^{\infty}_{n=0}\int_{Ka^n} {\left|T^n{\varphi}\right|^p}\\
&\le \sum \limits^{\infty}_{n=0}\left|Ka^n\right| \left\|T^n \varphi\right\|^p_{\infty}\text{    ($\left|Ka^n\right|=\left|K\right|$)}\\
&= \left|K\right|\sum \limits^{\infty}_{n=0} \left\|T^n \varphi\right\|^p_{\infty} < \infty.\\
\end{align*}
Note that the last step follows by $\left\|T^n \varphi\right\|_{\infty}$ decay to zero by exponential type.

So the condition of frequent hypercyclicity criterion has been verfied.
\end{proof}
\begin{remark}
The existence of weighted translation operator $T_{a,w}$ is not unique. In fact, there are uncountable many weighted translations satisfying this lemma by setting $w|_A\equiv \alpha$ and $w|_B\equiv \beta$ for  $\alpha \in (0,1)$ and $\beta \in (1,\infty)$ whatever you like in the proof.
\end{remark}

\vskip 3em
Next theorem gives an answer to our main question.

\begin{theorem}
\label{main}
Let $G$ be a second countable locally compact group and $a$ be an aperiodic element in $G$, then there exists a weighted translation operator $T_{a,w}$ which is mixing, choatic and frequently hypercyclic on $L^p(G)$ for all $p\in [1,\infty)$, simultaneously.
\end{theorem}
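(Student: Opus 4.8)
The plan is to assemble the theorem from the two pillars already in place, with essentially no new work. First I would invoke Theorem \ref{equaperterminal}: since $G$ is a second countable locally compact group and $a\in G$ is aperiodic, the implication (1)$\Rightarrow$(3) there supplies a terminal pair $(A,B)$ with respect to $a$, i.e.\ a pair of disjoint closed subsets of $G$ such that every compact $K$ gets shifted into $A$ by large positive powers of $a$ and into $B$ by large negative powers. This is the only place the hypothesis ``aperiodic'' is used.

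Next I would apply Lemma \ref{existencelemma} to this terminal pair. That lemma produces, via the Hausdorffication $\pi\colon G\to\widetilde G$, normality of $\widetilde G$, and Urysohn's lemma, a bounded continuous weight $w\colon G\to(0,\infty)$ (one may take $w|_A\equiv\tfrac12$, $w|_B\equiv 2$, image in $[\tfrac12,2]$) for which the weighted translation $T_{a,w}$ satisfies the hypotheses (1)--(3) of the Frequent Hypercyclicity Criterion on $L^p(G)$ for every $p\in[1,\infty)$. I would stress the point that makes the ``simultaneously'' in the statement legitimate: the weight $w$ is manufactured once and for all from $(A,B)$ and Urysohn's lemma, with no reference to $p$, so a single $T_{a,w}$ works across all $L^p(G)$ at once.

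Finally, I would feed this into the quoted Frequent Hypercyclicity Criterion (\cite[Theorem 9.9 and Proposition 9.11.]{Linear_chaos}). For each fixed $p\in[1,\infty)$, $L^p(G)$ is a separable Banach space --- separability coming from the second countability of $G$ --- hence a separable Fr\'echet space, so the criterion applies with $X_0$ the bounded compactly supported functions and $S=T_{a,w}^{-1}=T_{a^{-1},w'}$ as in Lemma \ref{existencelemma}. The conclusion is that $T_{a,w}$ is frequently hypercyclic, and moreover chaotic and mixing (a fortiori weakly mixing and hypercyclic), on $L^p(G)$; since $w$ was independent of $p$, this holds for all $p\in[1,\infty)$ simultaneously, which is exactly the statement of Theorem \ref{main}.

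I do not expect a genuine obstacle: the substance of the theorem has already been discharged into Theorem \ref{equaperterminal} and Lemma \ref{existencelemma}. The only two points deserving an explicit sentence are (i) the uniformity of the weight over $p$, addressed above, and (ii) the reduction through the Hausdorffication when $G$ is not Hausdorff, which is already handled inside Lemma \ref{existencelemma} using fact (4) on $\Hom(G,Y)$ and the properness of $\pi$; so in the write-up I would simply cite those results and chain the implications.
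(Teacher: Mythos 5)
Your proposal is correct and follows exactly the paper's own route: the paper proves Theorem \ref{main} by simply citing Theorem \ref{equaperterminal} and Lemma \ref{existencelemma}, which is precisely the chain you describe. Your additional remarks on the $p$-independence of the weight and the Hausdorffication reduction are accurate elaborations of details the paper leaves implicit.
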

\begin{proof}
By Theorem \ref{equaperterminal} and Lemma \ref{existencelemma}.
\end{proof}

\begin{example}
Let $G$ be an arbitrary Lie group and $a$ be an aperiodic element in $G$, then there exists a weighted translation operator $T_{a,w}$ which is mixing, choatic and frequently hypercyclic on $L^p(G)$ for all $p\in [1,\infty)$, simultaneously.
\end{example}

\begin{example}
Let $G$ be a general linear group $GL(n,\mathbb{C})$, then $a$ is a periodic element of $G$ iff $a$ is diagonalizable with each eigenvalue has norm $1$ (i.e. if $\lambda$ is a eigenvalue of $a$, then $|\lambda |=1$). In some case, it is hard to verify $G$ has a terminal pair w.r.t. $a$ by hand when $a$ is aperiodic. Specially, when 
$a= \begin{bmatrix}
-1 & \hbox{\hskip1.7ex}1 \\
 0 & -1 
\end{bmatrix}$, but by the discussion above, $G$ has a terminal pair w.r.t. $a$.
\end{example}

\begin{remark}
To explain why $a$ is periodic if and only if $a$ is diagonalizable with all eigenvalue has norm $1$, we only need to prove the case that if $a$ is nondiagonalizable then $a$ is aperiodic. Since other cases follow by Example \ref{terexam} and Proposition \ref{teraper}. The same as the Example \ref{terexam}, we can assume $a$ is itself a Jordan form: 
\[a= \begin{bmatrix}
\lambda & 1 & \cdots & 0 \\
 0 & \lambda & \cdots & \vdots \\
\vdots & \cdots & \ddots & * \\
0 & \cdots & 0 & \lambda '
\end{bmatrix}.\]
Notation: Let any $x\in G$, we write $x=[x_1|x_2|...|x_n]$, where $x_i\in \mathbb{C}^n$ are the column of $x$. (Note that $x_i$ will never be zero vector since $x$ is invertible.)

Consider the map $f:G\to \mathbb{R}$, $f(x)=\left|ln\|x_2\|\right|$, by the calculation, we have $f(a^n)=\left|(n-1)ln|\lambda |+ln|n|+\frac{1}{2}ln |1+\frac{|\lambda |^2}{n^2}|\right|\to \infty$ as $n\to \infty$ whatever $\lambda$ might be ($\lambda$ never be zero since $a\in GL(n,\mathbb{C})$). Suppose $a$ is periodic, then $f(G(a))$ is compact in $\mathbb{R}$, a contradiction as $\{f(a^n)\}$ is unbounded in $\mathbb{R}$.

\end{remark}

\newpage
\phantomsection
\addcontentsline{toc}{section}{Summary}
\noindent
\textbf{Summary.}

Given an aperiodic element $a$ in $G$. In order to find an expilcit form of a hypercyclic weighted translations associated to $a$. We can first use Proposition \ref{homoterminal} or similar techniques to find a expilcit terminal pair $(A,B)$ w.r.t. to $a$ by pullback argument. And try to contruct a expilcit continuous function $w$ such that $w|_A=2^{-1}$, $w|_B=2$ and the image liying in $[2^{-1},2]\subset(0,\infty)$. Then the operator $T_{a,w}$ will satisfies the frequent hypercyclicity criterion on $L^p(G)$ for all $p\in [1,\infty)$, simultaneously, as Lemma \ref{existencelemma} says.

\begin{example}
Let $G=GL(n,\mathbb{C})$ and $a\in G$ with some eigenvalue $\lambda $ such that $|\lambda |> 1$. Write
\[a= P
\begin{bmatrix}
\lambda & * & \cdots & 0 \\
\vdots & \ddots & \ddots & \vdots \\
0 & \cdots & \lambda '& * \\
0 & \cdots & 0 & \lambda ''
\end{bmatrix}
P^{-1},\]
for some $P\in G$. It is an aperiodic element by Example \ref{terexam} and Proposition \ref{teraper}.
Then $(A,B)$ will be a expilcit terminal pair w.r.t. to $a$, where
\[A:=\left\{PxP^{-1}|\|x_1\|>=2\right\}\]
and
\[B:=\left\{PxP^{-1}|\|x_1\|<=\frac{1}{2}\right\}.\]
(Note that $x_1$ means the first column of $x$.)

Set
\[w(PxP^{-1})=
\begin{cases}
\frac{1}{2} & \text{ if } \|x_1\|>=2 \\ 
2 & \text{ if } \|x_1\|<=\frac{1}{2} \\
\|x_1\|^{-1} & \text{ others.}
\end{cases}
\]
Obviously, $w$ is a continue function.
Then as Lemma \ref{existencelemma} says. The operator $T_{a,w}$ will satisfies the frequent hypercyclicity criterion on $L^p(G)$ for all $p\in [1,\infty)$, simultaneously.
\end{example}

\phantomsection
\addcontentsline{toc}{section}{Acknowledgements}
\noindent
\textbf{Acknowledgements.}
The author would like to thank Chung-Chuan Chen, Chiun-Chuan Chen and Chun-Wei Lee for their useful suggestions. Special thanks to Yi-Chiuan Chen who advised me to consider the common properties of all known examples of hypercyclic weighted translation operators when $a$ is aperiodic. His advice leads to this paper.

\bibliographystyle{abbrv}
\bibliography{On_aperiodicity_and_hypercyclic_weighted_translation_operators} 

\begin{thebibliography}{1}

\bibitem{Birkhoff}
G.~Birkhoff.
\newblock A note on topological groups.
\newblock {\em Compositio Mathematica}, pages 427--430, 1936.

\bibitem{Chaotic_on_groups}
C.-C. Chen.
\newblock Chaotic weighted translations on groups.
\newblock {\em Archiv der Mathematik}, 97:61--68, 2011.

\bibitem{non-torsion}
C.-C. Chen.
\newblock Hypercyclic weighted translations generated by non-torsion elements.
\newblock {\em Archiv der Mathematik}, 101:135--141, 2013.

\bibitem{Hypercyclic_on_groups}
C.-C. Chen and C.-H. Chu.
\newblock Hypercyclic weighted translations on groups.
\newblock {\em Proceedings of the American Mathematical Society},
  139:2839--2846, 2011.

\bibitem{Linear_chaos}
K.-G. Grosse-Erdmann and A.~Peris.
\newblock Linear chaos.
\newblock {\em Springer, Universitext}, 2011.

\bibitem{E_Hewitt}
E.~Hewitt and K.~Ross.
\newblock Abstract harmonic analysis.
\newblock {\em Springer-Verlag, Heidelberg}, 1979.

\bibitem{plig}
R.~A. Struble.
\newblock Metrics in locally compact groups.
\newblock {\em Compositio Mathematica}, 28:217--222, 2006.

\bibitem{semigroups}
W.~S. Wolfgang~Desch and G.~F. Webb.
\newblock Hypercyclic and chaotic semigroups of linear operators.
\newblock {\em Cambridge University Press}, 17:793--819, 1997.

\end{thebibliography}

\begin{filecontents}{On_aperiodicity_and_hypercyclic_weighted_translation_operators.bib}

@article{Hypercyclic_on_groups,
      author="C-C. Chen and C-H. Chu",
      title="Hypercyclic weighted translations on groups",
      journal="Proceedings of the American Mathematical Society",
      volume="139",
      year="2011",
      pages="2839--2846"
},
@article{non-torsion,
      author="Chung-Chuan Chen",
      title="Hypercyclic weighted translations generated by non-torsion elements",
      journal="Archiv der Mathematik",
      volume="101",
      year="2013",
      pages="135--141"
},
@article{Birkhoff,
      author="Garrett Birkhoff",
      title="A note on topological groups",
      journal="Compositio Mathematica",
      year="1936",
      pages="427--430"
},
@article{Chaotic_on_groups,
      author="Chung-Chuan Chen",
      title="Chaotic weighted translations on groups",
      journal="Archiv der Mathematik",
      volume="97",
      year="2011",
      pages="61--68"
},
@article{plig,
      author="Raimond A. Struble",
      title="Metrics in locally compact groups",
      journal="Compositio Mathematica",
      year="2006",
      volume="28",
      year="1974",
      pages="217--222"
},
@article{E_Hewitt,
      author="E. Hewitt and K.A. Ross",
      title="Abstract harmonic analysis",
      journal="Springer-Verlag, Heidelberg",
      year="1979",
},
@article{Linear_chaos,
      author="K.-G. Grosse-Erdmann and A. Peris",
      title="Linear chaos",
      journal="Springer, Universitext",
      year="2011",
},
@article{semigroups,
      author="Wolfgang Desch, Wilhelm Schappacher and Glenn F. Webb",
      title="Hypercyclic and chaotic semigroups of linear operators",
      journal="Cambridge University Press",
      volume="17",
      year="1997",
      pages="793--819"
}

\end{filecontents} 

\vspace{.1in}
\end{document}